\theoremstyle{plain} 
\newtheorem{theorem}{\indent\sc Theorem}[section]
\newtheorem{lemma}[theorem]{\indent\sc Lemma}
\newtheorem{corollary}[theorem]{\indent\sc Corollary}
\newtheorem{proposition}[theorem]{\indent\sc Proposition}
\theoremstyle{definition} 
\newtheorem{definition}[theorem]{\indent\sc Definition}
\newtheorem{remark}[theorem]{\indent\sc Remark}
\newtheorem{example}[theorem]{\indent\sc Example}
\newcommand{\ddbar}{\partial \bar{\partial}}
\newcommand{\dbar}{\bar{\partial}}
\begin{document}

\title[Singular Hermitian metrics on vector bundles]{Approximations and examples of singular Hermitian metrics on vector bundles} 

\author[G. Hosono]{Genki Hosono} 

\subjclass[2010]{ 
Primary 32L10; Secondary 14F18.
}
%
\keywords{ 
Singular Hermitian metric, vector bundles, multiplier ideal sheaves.
}
\address{
Graduate School of Mathematical Sciences, The University of Tokyo \endgraf
3-8-1 Komaba, Meguro-ku, Tokyo, 153-8914 \endgraf
Japan
}
\email{genkih@ms.u-tokyo.ac.jp}

\maketitle

\begin{abstract}
We study singular Hermitian metrics on vector bundles.
There are two main results in this paper.
The first one is on the coherence of the higher rank analogue of multiplier ideals for singular Hermitian metrics defined by global sections.
As an application, we show the coherence of the multiplier ideal of some positively curved singular Hermitian metrics whose standard approximations are not Nakano semipositive.
The aim of the second main result is to determine all negatively curved singular Hermitian metrics on certain type of vector bundles, for example, certain rank 2 bundles on elliptic curves.
\end{abstract}


\section{Introduction}
The main purpose of this paper is to investigate properties of {\it singular Hermitian metrics} on vector bundles on complex manifolds.
In complex algebraic geometry, singular Hermitian metrics on line bundles and their {\it multiplier ideal sheaves} are very important and widely used.
The higher rank analogue of these notions in the vector bundle case are also considered and investigated in many papers (for example, \cite{BP}, \cite{deC}, \cite{Lem}, \cite{PT}, \cite{Rau}, \cite{Y}, etc.).
There are several nonequivalent definitions for singular Hermitian metrics on vector bundles. 
Here we adopt the following most general definitions (\cite{BP}, \cite{Rau}, see Definition \ref{shmdef}, \ref{Grifpos}). 
Let $X$ be a complex manifold and let $E$ be a holomorphic vector bundle on $X$.
A {\it singular Hermitian metric} on $E$ is a measurable function on $X$ whose values are nonnegative Hermitian forms.
We say that $h$ is {\it negatively curved} if $|s|_h^2$ is plurisubharmonic for every local holomorphic section $s \in \mathcal{O}(E)$, i.e.\ for every holomorphic section of $E$ on each open subset of $X$. We say that $h$ is {\it positively curved} if the dual metric $h^*$ is well-defined and negatively curved.
In general, appropriate definition of curvature currents is not known for general singular Hermitian metrics on vector bundles. Thus we avoid using a curvature current of a singular Hermitian metric to define the positivity and the negativity here.
Instead we use a characterization of Griffiths seminegativity of smooth metrics (Lemma \ref{CharGrifNeg}).

We have two main results in this paper. 
The first one is on the coherence of a higher rank analogue of multiplier ideal sheaves.
Let $h$ be a singular Hermitian metric on $E$.
Following \cite{deC} and \cite{Lem}, we denote by $E(h)$ {\it the sheaf of locally square integrable holomorphic sections of $E$ with respect to $h$}, i.e.\ $E(h)$ is a sheaf of local holomorphic sections $s$ of $E$ which satisfy $|s|^2_h \in L^1_{loc}$.
The coherence of these sheaves is a basic problem, and the line bundle case was established by Nadel \cite{Nad}.
The first main result is as follows.

\begin{theorem}\label{MainTheorem1}
Let $X$ be a complex manifold, $E$ be a holomorphic vector bundle on $X$, and $s_1, s_2,\ldots,s_N \in H^0(X,E)$ be a global holomorphic section of $E$.
Assume that there exists a Zariski open set $U$ such that, for each $x \in U$, a fiber $E_x$ over $x$ is generated by $\{s_i(x)\}_{i=1,2,\ldots,N}$.
Define a morphism of vector bundles
\[\phi: X \times \mathbb{C}^N \to E\]
by sending $(x,(a_1,a_2,\ldots,a_N))$ to $\sum a_i s_i(x)$.
Let $h_0$ be a smooth Hermitian metric on $X \times \mathbb{C}^N$ and let $h$ be the quotient metric of $h_0$ induced by $\phi$.
Then, the sheaf $E(h)$ of locally square integrable sections of $E$ with respect to $h$ is a coherent subsheaf of $\mathcal{O}(E)$.
\end{theorem}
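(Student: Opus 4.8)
The plan is to follow the scheme of Nadel's coherence proof for multiplier ideal sheaves, adapted to the bundle $E$, with the extra input coming from the morphism $\phi$. Since coherence is a local property, I would fix a point $p$ and work on a small Stein neighborhood $\Omega$ (a ball or polydisc) of $p$. On $\Omega$, let $\mathcal{H}$ be the set of holomorphic sections $s \in H^0(\Omega, E)$ with $\int_\Omega |s|_h^2 < \infty$; this is a separable Hilbert space. Let $\mathcal{E} \subseteq \mathcal{O}(E)$ be the subsheaf generated by $\mathcal{H}$. By the strong Noetherian property of coherent sheaves, on any $\Omega' \Subset \Omega$ the sheaf $\mathcal{E}$ is generated by finitely many elements of $\mathcal{H}$, so $\mathcal{E}$ is coherent; and since each element of $\mathcal{H}$ lies in $E(h)$, we have $\mathcal{E} \subseteq E(h)$. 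Everything then reduces to proving the reverse inclusion $E(h)_p \subseteq \mathcal{E}_p$.

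For this I would use the Krull intersection theorem. As $\mathcal{O}(E)_p/\mathcal{E}_p$ is a finitely generated module over the Noetherian local ring $\mathcal{O}_{X,p}$, we have $\bigcap_k \bigl(\mathcal{E}_p + \mathfrak{m}_p^k\,\mathcal{O}(E)_p\bigr) = \mathcal{E}_p$. Hence it suffices to show: for $s \in E(h)_p$ and every $k$ there is a global section $F \in \mathcal{H}$ agreeing with $s$ to order $k$ at $p$, i.e.\ $F - s \in \mathfrak{m}_p^k\,\mathcal{O}(E)_p$. I would produce $F$ by an $L^2$ existence theorem in the familiar way: pick a cutoff $\chi$ equal to $1$ near $p$, solve $\dbar u = \dbar(\chi s)$ with $u$ vanishing to order $k$ at $p$ and $\int_\Omega |u|_h^2 < \infty$, and set $F = \chi s - u$; the prescribed vanishing order is forced by running the estimate with an additional weight $(n+k)\log|z-p|^2$, where $n = \dim X$, exactly as in the line bundle case.

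The difficulty, and the reason for the hypothesis that $h$ is a \emph{quotient} of the smooth metric $h_0$, is that $h$ need not be Nakano (nor even Griffiths) positive, so Hörmander's estimate cannot be applied to $(E,h)$ directly; this is precisely the failure of Nakano semipositivity flagged in the abstract. My resolution is to transfer the $\dbar$-problem to the trivial bundle $X \times \mathbb{C}^N$, where the $L^2$ theory is unobstructed: the metric $h_0$ is smooth, and after twisting by $e^{-\psi}$ with $\psi$ strictly plurisubharmonic, $(X\times\mathbb{C}^N, h_0 e^{-\psi})$ becomes Nakano positive. Concretely I would lift the data $g = \dbar(\chi s)$ to a trivial-bundle-valued form $\tilde g$ with $\phi(\tilde g) = g$, solve $\dbar\tilde u = \tilde g$ upstairs via the Hörmander/Ohsawa--Takegoshi estimate with weight $\psi + (n+k)\log|z-p|^2$, and push the solution down by $u = \phi(\tilde u)$. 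Since $\phi$ is a morphism of sheaves, $\dbar u = \phi(\dbar\tilde u) = \phi(\tilde g) = g$, and the quotient-metric inequality $|\phi(w)|_h \le |w|_{h_0}$ transfers the upstairs bound to the required estimate $\int_\Omega |u|_h^2 < \infty$; boundedness of $\psi$ on $\Omega$ and the vanishing of $\tilde u$ at $p$ then give $F \in \mathcal{H}$ with the correct jet.

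The main obstacle I expect lies in realizing this lift cleanly near the bad set $Z = X \setminus U$, where $\phi$ drops rank. One cannot choose the annular support of $\dbar\chi$ to avoid $Z$ when $p \in Z$, and although the pointwise minimal lift of $g$ is automatically $L^2$ with respect to $h_0$ by the very definition of the quotient metric, it need not be $\dbar$-closed; a smooth global lift, on the other hand, degenerates along $Z$. The genuine technical heart is thus to construct a lift $\tilde g$ that is simultaneously $\dbar$-closed and $L^2$-controlled across $Z$: I would do this by first solving on $U$, where $\phi$ is a surjective bundle map and a smooth right inverse $\phi^*(\phi\phi^*)^{-1}$ is available, and then controlling the behavior across $Z$ using that $Z$ is a proper analytic subset (hence of measure zero) together with the boundedness of $|\phi(w)|_h/|w|_{h_0}$, via an approximation argument or Noetherian induction on a stratification of $Z$. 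Once the resulting $u$ is verified to be $L^2$ up to and across $Z$, the Krull argument closes the inclusion $E(h) \subseteq \mathcal{E}$ and yields coherence.
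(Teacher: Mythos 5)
Your global skeleton (the Hilbert space $\mathcal{H}$ of $L^2$ holomorphic sections, the strong Noetherian property, the Krull intersection theorem, and an $L^2$ existence theorem to match $k$-jets at $p$) is the standard Nadel--H\"ormander scheme and is sound as far as it goes. The problem is that the one step you yourself call ``the genuine technical heart'' --- producing a lift $\tilde g$ of $g=\dbar(\chi s)$ to $X\times\mathbb{C}^N$ that is simultaneously $\dbar$-closed and $L^2$ for $h_0$ across the degeneracy locus $Z=X\setminus U$ --- is exactly where the proof is missing, and none of the devices you name closes it. The pointwise minimal lift $\phi^*(\phi\phi^*)^{-1}g$ is indeed $L^2$ (its $h_0$-norm equals $|g|_h$ by the very definition of the quotient metric), but it is not $\dbar$-closed; correcting it means solving a further $\dbar$-equation for a form with values in $\ker\phi$, which is a bundle only over $U$ and degenerates along $Z$, and the error term involves derivatives of $(\phi\phi^*)^{-1}$ that blow up near $Z$, so it is not even clear that it is $L^2$. ``An approximation argument or Noetherian induction on a stratification of $Z$'' is a hope, not an argument: the entire point of Theorem \ref{approximationex} and Example \ref{exampleapproximate} is that these quotient metrics resist precisely this kind of $L^2$ treatment, since no approximation with Nakano curvature uniformly bounded from below is available. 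Until the closed $L^2$ lift is exhibited, the $\dbar$-step, and with it the inclusion $E(h)_p\subseteq\mathcal{E}_p$, is unproven.

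The paper avoids all of this by never running H\"ormander on $E$ at all. After reducing to the case where $h_0$ is Euclidean (a general smooth $h_0$ is locally pinched between constant multiples of it, so $E(h)$ is unchanged), one computes the quotient metric in closed form,
\[|s|^2_h=\frac{\sum_{i_1<\cdots<i_{r-1}}\left|\det(s\,s_{i_1}\cdots s_{i_{r-1}})\right|^2}{\sum_{j_1<\cdots<j_r}\left|\det(s_{j_1}\cdots s_{j_r})\right|^2},\]
so that $s\in E(h)$ if and only if every $\det(s\,s_{i_1}\cdots s_{i_{r-1}})$ lies in the multiplier ideal $\mathcal{J}$ of the plurisubharmonic weight $\log\sum|\det(s_{j_1}\cdots s_{j_r})|^2$. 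Coherence then follows from Nadel's theorem applied to $\mathcal{J}$, since $E(h)$ is a finite intersection of preimages of $\mathcal{J}$ under $\mathcal{O}$-linear maps. If you want to salvage your route, the realistic options are to prove such an explicit formula yourself, or to invoke a genuinely new $L^2$-estimate property of quotient metrics of Nakano semipositive metrics; as written, the proposal does not constitute a proof.
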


Here we note that $\phi$ is surjective on a Zariski open set U of $X$ by assumption, thus $h$ is well-defined pointwise on U. Therefore $h$ is well-defined as a singular Hermitian metric (see Section 3 for well-definedness of singular Hermitian metrics).

Thus we can solve the problem of coherence of $E(h)$ for metrics of this particular form. On the other hand, if locally there exists a sequence of smooth Hermitian metrics $\{h_j\}$ satisfying
$h_j \uparrow h$ pointwise and $\Theta_{h_j} >_{\rm Nak} \gamma \otimes {\rm Id}_E$ for a fixed continuous (1,1)-form  $\gamma$,
we have that $E(h)$ is coherent (\cite[Section 4]{deC}).
For proving this, we use H\"ormander's $L^2$ estimate. 
The condition that Chern curvature is bounded below in the sense of Nakano is important (although they can be weakened) for using $L^2$ theory.
For a more detailed discussion, see \cite[Section 3]{Rau}.
We construct an example of a singular Hermitian metric which has the form of Theorem \ref{MainTheorem1}, but its standard approximation does not have uniformly bounded curvature in the sense of Nakano (cf. Example \ref{exampleapproximate}).

\begin{theorem}\label{approximationex}
Let $E = X \times \mathbb{C}^2$ be a trivial vector bundle of rank two on $X=\mathbb{C}^2$.
Let $h_0$ be the standard Euclidean metric on $E$.
We denote by $(z,w)$ the standard coordinate on $X$ and let $s_1 =(1,0), s_2=(z,w)$ be global sections of $E$.
We define a singular Hermitian metric $h$ on $E$ as in Theorem \ref{MainTheorem1} using $s_1, s_2$.
Then, the standard approximation defined by convolution of $h$ does not have uniformly bounded curvature from below in the sense of Nakano.
\end{theorem}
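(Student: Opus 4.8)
The plan is to make everything explicit and then read the Chern curvature of the smoothings off directly. In the standard frame $e_{1},e_{2}$ of $E=X\times\mathbb{C}^{2}$ the map $\phi$ is the matrix $\Phi=\begin{pmatrix}1 & z\\ 0 & w\end{pmatrix}$, which is invertible exactly on $U=\{w\neq 0\}$. Hence the quotient metric is $h=(\Phi\Phi^{*})^{-1}$, a smooth positive Hermitian matrix on $U$ whose $(2,2)$-entry is $(1+|z|^{2})/|w|^{2}$; in particular $h$ is \emph{flat} on $U$ (there $\phi$ is an isomorphism from the flat bundle onto $(E,h)$) and blows up along $\{w=0\}$. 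I would record the dual picture as well: $h^{*}=\Phi\Phi^{*}$ is a smooth Griffiths-seminegative metric on $E^{*}$, positive definite on $U$ and degenerating to rank one on $\{w=0\}$.

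Next I would pin down the standard smoothing. Since $h$ itself is not even locally integrable near $\{w=0\}$, the convolution must be carried out on the negatively curved side: one sets $h^{*}_{\epsilon}=h^{*}*\rho_{\epsilon}$ for a mollifier $\rho_{\epsilon}$ and takes $h_{\epsilon}=(h^{*}_{\epsilon})^{-1}$. Because every entry of $\Phi\Phi^{*}$ is a polynomial of degree $\le 2$, this convolution is computed exactly: $h^{*}_{\epsilon}=\Phi\Phi^{*}+\delta_{\epsilon}\,\mathrm{Id}$ with $\delta_{\epsilon}=\int|w'|^{2}\rho_{\epsilon}\sim\epsilon^{2}$. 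The key structural observation is that $\Phi\Phi^{*}+\delta_{\epsilon}\,\mathrm{Id}=\Phi_{\epsilon}\Phi_{\epsilon}^{*}$ for the holomorphic surjection $\Phi_{\epsilon}=[\,\Phi\mid\sqrt{\delta_{\epsilon}}\,\mathrm{Id}\,]\colon X\times\mathbb{C}^{4}\to E$, so that $h_{\epsilon}$ is again a genuine quotient metric of a trivial bundle; thus $h_{\epsilon}$ is smooth and Griffiths-semipositive. Consequently the Nakano form of $\Theta_{h_{\epsilon}}$ is nonnegative on every decomposable vector $\xi\otimes v$, and any failure of a uniform Nakano lower bound can only be produced by non-decomposable test vectors.

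The heart of the argument is then a curvature computation along $\{w=0\}$. Writing the Chern curvature coefficients as $\Theta_{i\bar j\lambda\bar\mu}=-\partial_{i}\partial_{\bar j}(h_{\epsilon})_{\lambda\bar\mu}+\sum_{\sigma,\tau}(h_{\epsilon}^{-1})^{\tau\bar\sigma}\,\partial_{i}(h_{\epsilon})_{\lambda\bar\sigma}\,\partial_{\bar j}(h_{\epsilon})_{\tau\bar\mu}$ (indices $i,j\in\{z,w\}$, $\lambda,\mu\in\{1,2\}$), I would evaluate at a point of $\{w=0\}$. There the purely diagonal Griffiths components $\Theta_{z\bar z\,2\bar 2}$ and $\Theta_{w\bar w\,1\bar 1}$ vanish, while the mixed component survives:
\[
\Theta_{z\bar w\,2\bar 1}\big|_{w=0}=\frac{1+\delta_{\epsilon}}{\delta_{\epsilon}(1+|z|^{2}+\delta_{\epsilon})^{2}}\sim\frac{1}{\delta_{\epsilon}}.
\]
Testing against the non-decomposable vector $u=p\,\partial_{z}\otimes e_{2}+q\,\partial_{w}\otimes e_{1}$ gives a Nakano form equal to $2\,\mathrm{Re}\big(\Theta_{z\bar w\,2\bar 1}\,p\bar q\big)$, which is negative for a suitable phase. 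Since $|e_{2}|^{2}_{h_{\epsilon}}=1/\delta_{\epsilon}$ and $|e_{1}|^{2}_{h_{\epsilon}}=1/(1+|z|^{2}+\delta_{\epsilon})$ on $\{w=0\}$, one has $|u|^{2}_{h_{\epsilon}}=|p|^{2}/\delta_{\epsilon}+|q|^{2}/(1+|z|^{2}+\delta_{\epsilon})$; optimising the ratio of the Nakano form to $|u|^{2}_{h_{\epsilon}}$ by the arithmetic–geometric mean inequality (the balance occurs at $|p/q|\sim\sqrt{\delta_{\epsilon}}$) yields a minimal value of order $-\delta_{\epsilon}^{-1/2}\sim-\epsilon^{-1}$. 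Therefore for each $\epsilon$ there are a point of $\{w=0\}$ and a unit vector $u_{\epsilon}\in T_{X}\otimes E$ with Nakano value $\le -c\,\epsilon^{-1}$, so no continuous $(1,1)$-form $\gamma$ can satisfy $\Theta_{h_{\epsilon}}\ge_{\mathrm{Nak}}\gamma\otimes\mathrm{Id}_{E}$ uniformly in $\epsilon$.

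I expect the main obstacle to be the curvature computation near the degeneration locus. Because $h_{\epsilon}$ is flat away from $\{w=0\}$ while its norm in the $e_{2}$-direction is of size $\delta_{\epsilon}^{-1}$, the curvature concentrates on $\{w=0\}$, and the naive test vectors (decomposable, or with equal weights $p=q$) only detect the bounded Griffiths part. The delicate point is to pair the correct non-decomposable direction with the correct anisotropic scaling $|p/q|\sim\sqrt{\delta_{\epsilon}}$, so that the blow-up of the mixed curvature component is not masked by the large norm of $e_{2}$. Keeping honest track of this cancellation — equivalently, of the Hermitian conjugates in the off-diagonal entries of $h_{\epsilon}$, which is exactly where a careless sign makes the Griffiths components appear (falsely) negative — is the step that has to be carried out with care.
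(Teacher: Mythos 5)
Your proposal is correct and follows essentially the same route as the paper: both identify the convolution smoothing of the dual metric as $h_{\mathrm{dual}}+\delta_{\varepsilon}\,\mathrm{Id}$, compute the Chern curvature of its dual, and exhibit a Nakano-negative direction at a point of $\{w=0\}$ whose value (suitably normalized) tends to $-\infty$ as $\varepsilon\to 0$. The paper executes this by writing out the full $4\times 4$ matrix $\Theta'_{\mathrm{Nak},\varepsilon}$ and an explicit eigenvalue of $\Theta'_{\mathrm{Nak},\varepsilon}+C\,\omega\otimes\mathrm{Id}_E$ at the origin, whereas you isolate the single responsible component $\Theta_{z\bar{w}\,2\bar{1}}\sim 1/\delta_{\varepsilon}$ and optimize over the anisotropic test vector $p\,\partial_z\otimes e_2+q\,\partial_w\otimes e_1$ with $|p/q|\sim\sqrt{\delta_{\varepsilon}}$ --- and your stated values (the vanishing of $\Theta_{z\bar{z}\,2\bar{2}}$ and $\Theta_{w\bar{w}\,1\bar{1}}$ on $\{w=0\}$, the size of the mixed entry, and the norms $|e_1|^2_{h_\varepsilon}$, $|e_2|^2_{h_\varepsilon}$) agree exactly with the corresponding entries of the paper's matrix, so the two arguments differ only in presentation and in the normalization of the divergent quantity ($-\delta_{\varepsilon}^{-1/2}$ for the metric-normalized Nakano form versus the paper's matrix eigenvalue of order $-1/\delta_{\varepsilon}$).
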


As a corollary of Theorem \ref{MainTheorem1}, we prove the coherence of $E(h)$ even in this situation.

\begin{corollary}\label{exampleofmainthm1}
For $h$ constructed in Theorem \ref{approximationex}, we have that $E(h)$ is coherent.
\end{corollary}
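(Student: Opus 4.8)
The plan is to deduce the statement directly from Theorem \ref{MainTheorem1}, so the whole task reduces to verifying that the data of Theorem \ref{approximationex} satisfy the hypotheses of that theorem. Here $X = \mathbb{C}^2$, $E = X \times \mathbb{C}^2$ is trivial of rank two, $N = 2$, the ambient metric $h_0$ is the Euclidean metric, and the two global sections are $s_1 = (1,0)$ and $s_2 = (z,w)$, which is exactly the admissible shape required by Theorem \ref{MainTheorem1}.

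The key (and essentially only) point to check is the generation condition. I would examine the matrix whose columns are $s_1(z,w)$ and $s_2(z,w)$, namely $\begin{pmatrix} 1 & z \\ 0 & w \end{pmatrix}$, whose determinant is $w$. Hence $\{s_1(x), s_2(x)\}$ spans the fiber $E_x \cong \mathbb{C}^2$ precisely when $w \neq 0$. Setting $U = \{ (z,w) \in \mathbb{C}^2 : w \neq 0 \}$, we obtain a nonempty Zariski open subset of $X$ over which $E_x$ is generated by the $s_i(x)$. Consequently the morphism $\phi(x,(a_1,a_2)) = a_1 s_1(x) + a_2 s_2(x)$ is surjective over $U$, and the quotient metric $h$ of $h_0$ induced by $\phi$ is well-defined pointwise on $U$, hence well-defined as a singular Hermitian metric on $E$.

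With these verifications in place, the metric $h$ of Theorem \ref{approximationex} is, by its very construction, a quotient metric of the form treated in Theorem \ref{MainTheorem1}. I would then simply invoke Theorem \ref{MainTheorem1} to conclude that $E(h)$ is a coherent subsheaf of $\mathcal{O}(E)$, which is the assertion of the corollary.

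I do not expect any genuine obstacle here, since all the analytic content is already carried by Theorem \ref{MainTheorem1}; the role of this corollary is conceptual rather than technical. By Theorem \ref{approximationex}, the standard convolution approximation of this specific $h$ fails to have uniformly bounded curvature from below in the sense of Nakano, so the usual $L^2$-based coherence argument recalled after Theorem \ref{MainTheorem1} (following \cite{deC}) does not apply to it. The point I would emphasize is therefore that Theorem \ref{MainTheorem1}, which never passes through such an approximation, nonetheless yields coherence precisely in the situation where the standard approximation method breaks down.
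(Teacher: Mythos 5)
Your proof is correct and follows the same route the paper intends: the corollary is an immediate application of Theorem \ref{MainTheorem1} once one checks that $s_1=(1,0)$ and $s_2=(z,w)$ generate the fiber on the Zariski open set $\{w\neq 0\}$ (via the determinant $w$), which is exactly how the metric of Theorem \ref{approximationex} is constructed. The paper gives no separate argument beyond this, so your verification matches its proof essentially verbatim.
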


Thus Theorem \ref{MainTheorem1} does not seem to be a consequence of the results in \cite{deC}.

Our second main theorem is on the determination of singular Hermitian metrics on certain vector bundles.
We are mainly interested in determining all singular Hermitian metrics with certain curvature-positivity conditions on the vector bundle in \cite[Example 1.7]{DPS}. 
First we explain this example. Let $C$ be an elliptic curve and let $E$ be the vector bundle defined by the following non-splitting exact sequence:
\[0 \longrightarrow \mathcal{O}_C \longrightarrow E \longrightarrow \mathcal{O}_C \longrightarrow 0.\]
It is known that the bundle $E$ satisfying this condition is unique.
In \cite[Example 1.7]{DPS}, a line bundle $L=\mathcal{O}_{\mathbb{P}(E)}(1)$ on the projectivization $\mathbb{P}(E)$ is considered. They determined all singular Hermitain metrics on $L$ with positive curvature.
In this paper, we determine all negatively curved singular Hermitian metrics on $E$.
This is a parallel result with \cite[Example 1.7]{DPS}, since singular Hermitian metrics on $L$ can be regarded as ``singular Finsler metrics'' on $E$.
Our result is the following:

\begin{theorem}\label{MainTheorem2}
Let $X$ be a compact complex manifold, $L$ be a holomorphic line bundle on $X$, and $E, E'$ be holomorphic vector bundles on $X$.
Assume there is an exact sequence
\[0 \longrightarrow L \overset{i}\longrightarrow E \overset{p}\longrightarrow E' \longrightarrow 0.\]
Suppose that there are a holomorphic section $f \in H^0(X,L^*)$ and a negatively curved singular Hermitian metric $h$ on $E$ with $|i(s)|^2_h = | (f,s) |^2$ for each $s \in L$, where $( \cdot, \cdot )$ is the natural pairing on $L_x^* \times L_x$.
Then, this exact sequence splits.
\end{theorem}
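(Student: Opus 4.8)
The plan is to produce a holomorphic splitting directly from the metric, by showing that the $h$-orthogonal complement of $i(L)$ is a holomorphic subbundle. First I would look at the subbundle $i(L)\subset E$ with its induced metric. In a local holomorphic frame $e$ of $L$ the hypothesis $|i(s)|_h^2=|(f,s)|^2$ reads $|i(e)|_h^2=|f_0|^2$ with $f_0=(f,e)$ a local holomorphic function, so that on the Zariski open set $U:=X\setminus\operatorname{div}(f)$ the restricted metric is flat: $\log|i(e)|_h^2=\log|f_0|^2$ is pluriharmonic there.

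Next comes the rigidity step. On $U$ let $S:=i(L)^{\perp_h}$ be the $h$-orthogonal complement, a $C^\infty$ complement with $p|_S\colon S\xrightarrow{\ \sim\ }E'$. Since $h$ is negatively curved, the curvature of the submetric on $i(L)$ equals $\Theta_h|_{i(L)}$ minus the nonnegative term $\beta^{*}\!\wedge\beta$ built from the second fundamental form $\beta$ of $i(L)\subset E$; the left-hand side vanishes on $U$ by flatness, while $\Theta_h|_{i(L)}\le 0$ by negativity, so both $\Theta_h|_{i(L)}$ and $\beta^{*}\!\wedge\beta$ vanish on $U$. Hence $\beta\equiv 0$ on $U$, i.e.\ $S$ is a holomorphic subbundle and the sequence splits holomorphically over $U$. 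For the genuinely singular metric I would run this equality-case argument through the characterization in Lemma \ref{CharGrifNeg}, applied to the holomorphic family of local sections $u+\lambda\,i(e)$, reading off that the minimizing ($h$-orthogonal) lift depends holomorphically on the base.

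Letting $q\colon E|_U\to L$ be the holomorphic retraction onto $i(L)$ furnished by this splitting, I obtain a holomorphic cosection $\tilde f:=f\circ q\in\Gamma(U,E^*)$ with $i^{*}\tilde f=f$. The metric condition gives the clean bound $|\tilde f(u)|^2=|(f,q(u))|^2=|i(q(u))|_h^2=|P_{i(L)}u|_h^2\le|u|_h^2$ for every local holomorphic $u$; as $|u|_h^2$ is plurisubharmonic it is locally bounded above, so $\tilde f$ is locally bounded near $\operatorname{div}(f)$ and extends, by Riemann's removable-singularity theorem, to a global $\tilde f\in H^0(X,E^*)$ with $i^{*}\tilde f=f$.

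The sequence splits globally as soon as $q=\tilde f\otimes f^{-1}$ is holomorphic, i.e.\ as soon as $\tilde f$ is divisible by $f$ (equivalently $\tilde f$ vanishes along $D:=\operatorname{div}(f)$; note $\tilde f|_D$ automatically lands in $(E')^*$). \textbf{This divisibility across $D$ is the crux and the step I expect to be the main obstacle}: Cauchy--Schwarz only yields $h(u,i(e))=O(|f_0|)$, whereas divisibility demands $O(|f_0|^2)$, so one further vanishing order must be extracted. This is where I expect compactness of $X$ to be essential — the local holomorphic orthogonal lift $f_0\eta-\psi\,i(e)$ degenerates into $i(L)$ along $D$ precisely when the holomorphic function $\psi$ fails to vanish on $D$, and I would try to exclude this by a global maximum-principle / subharmonicity argument for the positively curved quotient metric induced on $L^{*}$, under which $f$ has constant norm $1$ away from $D$. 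When $f$ is nowhere vanishing — in particular in the elliptic-curve situation of \cite[Example 1.7]{DPS}, where $f$ is a nonzero constant — we have $U=X$ and the first two steps already produce the splitting, so the obstacle arises only when $f$ has zeros.
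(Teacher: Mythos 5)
Your first three steps are sound and run parallel to the paper's own argument, though by a different route: where you argue that flatness of $h|_{i(L)}$ over $U=X\setminus\operatorname{div}(f)$ forces the second fundamental form to vanish, the paper proves the same statement in coordinates (Proposition \ref{locrep}), by testing plurisubharmonicity of $|u\,e_1+e_2|_h^2$ and $|u\,e_1+g_1e_2|_h^2$ for constants $u$ and passing to limits of currents (Lemma \ref{pshlim}) to conclude that $\langle e_1,e_j\rangle_h=f\,\overline{g_j}$ with $g_j$ holomorphic; and your extension of $\tilde f$ across $\operatorname{div}(f)$ via $|\tilde f(u)|^2\le|u|_h^2$ and Riemann's theorem is exactly the paper's extension of $B/f$ using $\det h\ge 0$ and the local upper bound on the plurisubharmonic function $C$. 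Be aware, though, that the curvature/second-fundamental-form computation is only heuristic for a singular $h$ (no curvature current is available in this generality); to make it rigorous you must actually carry out the kind of argument in Proposition \ref{locrep}, which you only gesture at.

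The step you flag as the crux --- divisibility of $\tilde f$ by $f$, i.e.\ holomorphy of the retraction $q=\tilde f\otimes f^{-1}$ across $D=\operatorname{div}(f)$ --- is indeed where the difficulty lies, and your suspicion that it is a genuine obstacle is correct: it cannot be overcome in general, because the statement fails when $f$ has zeros. Take $X=\mathbb{P}^1$, $E=X\times\mathbb{C}^2$, $L=\mathcal{O}(-1)$ the tautological subbundle, so $E'=\mathcal{O}(1)$ and the twisted Euler sequence $0\to\mathcal{O}(-1)\to\mathcal{O}^{\oplus2}\to\mathcal{O}(1)\to0$ does not split (any splitting would give a nonzero map $\mathcal{O}(1)\to\mathcal{O}^{\oplus2}$, but $H^0(\mathbb{P}^1,\mathcal{O}(-1))=0$). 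Fix $0\neq f_0\in(\mathbb{C}^2)^*$ and set $h(v,w):=f_0(v)\overline{f_0(w)}$, a degenerate metric with $\det h\equiv0$, which Definition \ref{shmdef} explicitly admits. It is negatively curved, since $|s|_h^2=|f_0(s)|^2$ is the square modulus of a holomorphic function, and $f:=i^*f_0$ is a nonzero section of $L^*=\mathcal{O}(1)$ (vanishing at the single point where the tautological line lies in $\ker f_0$) with $|i(s)|_h^2=|(f,s)|^2$. All hypotheses of the theorem hold and the conclusion is false. So the extra order of vanishing you are looking for simply is not there; the theorem requires $f$ to be nowhere vanishing (equivalently $L^*\cong\mathcal{O}_X$), which is the situation of the introduction's sketch and of the elliptic-curve application, and under that hypothesis your first two steps already finish the proof, as you observe.

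For what it is worth, the paper's own proof passes over the same point: the \v{C}ech $0$-cochain $\sum_i\gamma_{\alpha,i}\,\xi'_{\alpha,i}\otimes f^{-1}$ whose coboundary is the extension cocycle has local expression $(\gamma_{\alpha,i}/f_\alpha)\,s_\alpha\otimes\xi'_{\alpha,i}$, which is only meromorphic along $\{f_\alpha=0\}$ unless $\gamma_{\alpha,i}$ is divisible by $f_\alpha$ --- and nothing forces that (in the example above $\gamma$ is a nonzero constant). The correct repair is not to extract more vanishing but to add the hypothesis that $f$ has no zeros; with that, either your argument or the paper's closes.
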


Our technique is different from \cite{DPS}.
To make the argument clear, we consider the simpler case that $h$ is a smooth Hermitian metric (in particular, $h$ is positive definite on every point of $X$).
In this case, $|i(s)|^2_h = | (f,s) |^2 \neq 0$ for non-zero $s \in L$, thus $f$ is non-vanishing. Therefore $L^*$ must be isomorphic to the trivial line bundle $\mathcal{O}_X$.
In addition, for simplicity, we make additional assumption that $E'$ is also a trivial line bundle.
We will sketch the proof in this case.

Let $\{U_\alpha\}$ be a covering of $X$ such that each $U_\alpha$ is enough small.
We can assume $E|_{U_\alpha}$ is isomorphic to a trivial vector bundle.
Take a holomorphic frame $(e_{\alpha,1}, e_{\alpha,2})$ of $E$ satisfying $i(1) = e_{\alpha,1}$ and $p(e_{\alpha, 2}) = 1$.
Then there is a holomorphic function $h_{\alpha\beta}$ on $U_{\alpha} \cap U_{\beta}$ with 
$e_{\alpha, 2} - e_{\beta,2} = h_{\alpha\beta} e_{\beta,1}$.
Let $\gamma_\alpha := \langle e_{\alpha,2},e_{\alpha,1}\rangle_h$, where $\langle \cdot, \cdot \rangle_h$ denotes the Hermitian inner product of $h$.
Then we have
\[\gamma_{\alpha} = h_{\alpha\beta} + \gamma_{\beta}.\]
By the negativity of $h$, we can show that $\gamma_{\alpha}$ is a holomorphic function on $U_\alpha$ (see Proposition \ref{locrep}).
Consider \v{C}ech 0-cochain $\left(\gamma_{\alpha}, U_\alpha\right)_\alpha$.
Its differential is the \v{C}ech 1-cocycle $(h_{\alpha\beta}, U_{\alpha\beta})_{\alpha,\beta}$, which is identical to the extension class of the given exact sequence.
Thus the extension class is 0 in $H^1(X,\mathcal{O}_X)$ and the given sequence is trivial.

The most important point is that negativity of $h$ implies holomorphicity of $\gamma_{\alpha}$. The proof in the general case is similar to this argument, but it requires more complicated calculation.

As a corollary of Theorem \ref{MainTheorem2}, we determine all negatively curved singular Hermitian metrics on $E$.

\begin{corollary}
Let $C$ be an elliptic curve, and $E$ be a rank 2 bundle defined by the non-splitting exact sequence
\[0 \longrightarrow \mathcal{O}_C \longrightarrow E \overset{p}\longrightarrow \mathcal{O}_C \longrightarrow 0.\]
Let $h$ be a negatively curved singular Hermitian metric on $E$.
Then $h = p^*h'$ holds for a constant metric $h'$ on $\mathcal{O}_C$, where $p^* h'$ is defined by the formula $|s(z)|_{p^*h'}:=|p(s(z))|_{h'}$ for any $z \in C$ and a section $s$ of $E$.
\end{corollary}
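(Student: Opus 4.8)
The plan is to reduce the statement to Theorem \ref{MainTheorem2} by first extracting a constant from the global section $i(1)$, and then to run a descent argument once that constant is shown to vanish. Throughout, write $L = \mathcal{O}_C$ for the sub-line-bundle and $E' = \mathcal{O}_C$ for the quotient, so that $L^* \cong \mathcal{O}_C$ and $H^0(C, L^*)$ consists precisely of the constant functions.

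First I would observe that $i(1)$, the image of the constant section $1 \in H^0(C, \mathcal{O}_C)$, is a global holomorphic section of $E$. Since $h$ is negatively curved, the function $z \mapsto |i(1)|_h^2(z)$ is plurisubharmonic and nonnegative on $C$. As $C$ is a compact connected complex manifold, the maximum principle forces this function to be a constant $c \geq 0$. More generally, writing a local section $s$ of $L$ as $\lambda \cdot 1$, one gets $|i(s)|_h^2 = c\,|\lambda|^2$.

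The conceptual heart of the argument is then a dichotomy on $c$. Suppose $c > 0$. Taking $f := \sqrt{c} \in H^0(C, \mathcal{O}_C) = H^0(C, L^*)$ and using $(f,s) = \sqrt{c}\,\lambda$, we obtain $|i(s)|_h^2 = c\,|\lambda|^2 = |(f,s)|^2$ for every $s \in L$. Thus the hypotheses of Theorem \ref{MainTheorem2} hold, and that theorem would force the given exact sequence to split, contradicting the non-splitting assumption that characterizes $E$. Hence $c = 0$.

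Finally, I would convert $c = 0$ into the asserted normal form. The vanishing $|i(1)|_h^2 \equiv 0$ means that at almost every point $z$ the vector $i(1)(z)$ lies in the kernel of the nonnegative Hermitian form $h(z)$ (by the Cauchy--Schwarz inequality for positive semidefinite forms), hence so does the whole fiber $i(L)_z$. Consequently $h(z)$ factors through the quotient $E_z / i(L)_z = E'_z$, defining a singular Hermitian metric $h'$ on $E' = \mathcal{O}_C$ with $h = p^* h'$. Lifting local sections of $E'$ through the surjection $p$ shows that $h'$ inherits negative curvature from $h$; then $|1|_{h'}^2$ is a nonnegative plurisubharmonic function on the compact curve $C$, hence constant, so $h'$ is a constant metric. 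The step I expect to require the most care is this last descent: verifying that the pointwise quotient of a measurable family of positive semidefinite forms is again a legitimate singular Hermitian metric in the sense of Definition \ref{shmdef}, and that negativity genuinely passes to $h'$. The remainder is a short combination of Theorem \ref{MainTheorem2} with the maximum principle for plurisubharmonic functions on a compact complex manifold.
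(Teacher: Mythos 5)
Your proposal is correct and follows essentially the same route as the paper: the paper likewise observes that $h|_{\mathcal{O}_C}$ corresponds to a subharmonic (hence constant) function on the compact curve, rules out a nonzero constant by invoking Theorem \ref{MainTheorem2} against the non-splitting hypothesis, and then descends the degenerate metric to the quotient via the Cauchy--Schwarz argument, which is exactly the content of the paper's Lemma \ref{MetricP}. No substantive differences.
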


The organization of the paper is as follows.
In Section \ref{Preliminaries}, we collect preliminary materials related to smooth Hermitian metrics on vector bundles and singular Hermitian metrics on line bundles.
Section \ref{SingHermMetonVB} contains the definition of singular Hermitian metrics on vector bundles and their properties described in \cite{deC}, \cite{BP} and \cite{Rau}.
This section also contains some examples of singular Hermitian metrics on vector bundles.
In Section \ref{SingHermMetbyGlobalSections} and \ref{ClassifyGrifNegSingMet}, we prove the main theorems and make some discussions.

\subsection*{\indent Acknowledgments}
The author would like to thank his supervisor Prof.\ Shigeharu Takayama for enormous supports and insightful comments.
He would also like to thank Dr.\ Takayuki Koike for discussions and valuable comments. This work is supported by the Program for Leading Graduate Schools, MEXT, Japan.
This work is also supported by JSPS KAKENHI Grant Number 15J08115.

\section{Hermitian metrics on vector bundles and singular Hermitian metrics on line bundles}\label{Preliminaries}

In this section, we review some classical notions such as smooth Hermitian metrics on vector bundles and singular Hermitian metrics on line bundles.
Basic references for this section are \cite{DemCG} and \cite{DemAM}.

\subsection*{\indent Notation.}
Throughout this paper, $X$ denotes a complex manifold and $E$ denotes a holomorphic vector bundle on $X$.
For $x \in X$, $E_x$ denotes the fiber of $E$ over $x$.
We use the notation $s \in E$ for denoting a vector in some fiber of $E$, i.e.\ a point of the total space of $E$.
The sheaf of holomorphic sections of $E$ is denoted by $\mathcal{O}(E)$.
The natural pairing $E_x^* \times E_x \to \mathbb{C}$ on each fiber is denoted by $(\cdot,\cdot)$.
We denote by ${}^t A$ the transpose of a matrix $A$.

\subsection{Smooth Hermitian metrics on vector bundles}\label{SmHermMetVB}
Let $h$ be a Hermitian metric on $E$, i.e.\ $h$ defines a positive definite Hermitian inner product $\langle \cdot,\cdot \rangle_h$ on each fiber $E_x$ and, for any smooth local section $s,t$ of $E$, $\langle s,t \rangle_h$ is a smooth function.
We denote by $|\cdot|_h$ the norm on $E$.
If we fix a local holomorphic frame $(s_1, s_2, \ldots, s_r)$ of $E$, we can identify $s = \sum c_i s_i \in E$ with a column vector ${}^t(c_1, c_2, \ldots, c_r)$.
Then $h$ has a matrix representation defined by
\[\langle s,t \rangle_h = {}^t s h \overline{t}.\]

The {\it Chern curvature} $\Theta$ of $(E,h)$ is an ${\rm End}(E)$-valued (1,1)-form locally defined by $\Theta = \dbar(\overline{h}^{-1}\partial \overline{h})$. 
For $s, t \in E$ and differential forms $\alpha, \beta$, we introduce the following notation
\[\{ s \otimes \alpha, t \otimes \beta\}_h := \langle s, t \rangle_h \alpha \wedge \overline{\beta}.\] 

Here we recall the notion of the positivity.
\begin{definition}\label{DefSmPos}
Let $\Theta$ be a Hermitian form on $E \otimes T_X$.\\
(1) We say that $\Theta$ is {\it Griffiths semipositive} (resp.\ {\it Griffiths positive}) if $\Theta(s \otimes \xi) \geq 0$ (resp.\ $> 0)$ for every local section $s \in E, \xi \in T_X$.\\
(2) We say that $\Theta$ is {\it Nakano semipositive} (resp.\ {\it Nakano positive}) if $\Theta(\sum s_i \otimes \xi_i) \geq 0$ (resp.\ $> 0)$ for every local section $s_i \in E, \xi_i \in T_X, i=1,2,...,{\rm min}(n,r)$.

We say that $(E,h)$ is {\it Griffiths semipositive} if the Hermitian form $\Theta(s\otimes\xi, t\otimes\eta) := \{\Theta s,t\}_h(\xi,\overline{\eta})$ on $E \otimes T_X$ defined by its Chern curvature $\Theta$ is Griffiths semipositive.
It is equivalent to the condition that $i\{\Theta s, s\}_h$ is a positive $(1,1)$-form for every $s \in E$.
When a Hermitian form $\Theta$ on $E \otimes T_X$ is Griffiths  (resp.\ Nakano ) semipositive, we write $\Theta \geq_{\rm Grif} 0$ (resp.\ $\geq_{\rm Nak} 0$).
For two Hermitian forms $\Theta_1$ and $\Theta_2$, we write $\Theta_1 \geq_{\rm Nak} \Theta_2$ when $\Theta_1 - \Theta_2 \geq_{\rm Nak} 0$.
\end{definition}

If ${\rm dim} \, X = 1$ or ${\rm rank} \, E = 1$, Griffiths positivity and the Nakano positivity are equivalent.
Griffiths positivity has nice functorial properties.
For example, it is preserved under the quotient and the dual of a Griffiths positive vector bundle is Griffiths negative.
These properties do not hold for Nakano positivity.
Nakano positivity is often used to describe a condition for applying $L^2$-methods.

We give the following characterization of Griffiths negativity. This will be used as a definition in the singular case.

\begin{lemma}[{\cite[Section 2]{Rau}}]\label{CharGrifNeg}
Let $h$ be a smooth Hermitian metric on $E$. Then, the following conditions are equivalent.\\
(1) $h$ is Griffiths seminegative (as in Definition \ref{DefSmPos}).\\
(2) For every local holomorphic section $s \in \mathcal{O}(E)$, $|s|_h^2 = \langle s,s\rangle_h$ is plurisubharmonic.\\
(3) For every local holomorphic section $s \in \mathcal{O}(E)$, $\log |s|_h^2$ is plurisubharmonic.
\end{lemma}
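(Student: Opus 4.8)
The plan is to prove the cycle of implications $(3)\Rightarrow(2)\Rightarrow(1)\Rightarrow(3)$, all driven by the pointwise curvature identity
\[
i\ddbar\,|s|_h^2 \;=\; -\,i\{\Theta s,s\}_h \;+\; i\{D's,D's\}_h,
\]
valid for every local holomorphic section $s$, where $D'$ denotes the $(1,0)$-part of the Chern connection of $(E,h)$. The term $i\{D's,D's\}_h$ is always a nonnegative $(1,1)$-form, so the sign of $i\ddbar|s|_h^2$ is governed by the curvature part $-i\{\Theta s,s\}_h$ up to this nonnegative correction. Given the identity, the implication $(3)\Rightarrow(2)$ is immediate: writing $|s|_h^2=\exp(\log|s|_h^2)$ and using that the composition of a convex increasing function with a plurisubharmonic function is plurisubharmonic.

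For $(2)\Rightarrow(1)$, first I would recall that at any fixed $x_0\in X$ one can choose a holomorphic frame of $E$ that is normal at $x_0$, meaning that the matrix representation satisfies $h(x_0)=\mathrm{Id}$ and $dh(x_0)=0$; such a frame is obtained from any given one by a constant linear change making $h(x_0)=\mathrm{Id}$, followed by a holomorphic change of frame killing $\partial h(x_0)$, the $\dbar$-derivatives vanishing automatically by Hermitian symmetry. In such a frame the connection form of $D'$ vanishes at $x_0$. Hence, given $v\in E_{x_0}$ and taking for $s$ the section with constant coefficients equal to $v$ in this frame, we get $D's(x_0)=0$, and the identity collapses at $x_0$ to $i\ddbar|s|_h^2(x_0)=-i\{\Theta v,v\}_h$. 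Plurisubharmonicity of $|s|_h^2$ forces the left-hand side to be a nonnegative $(1,1)$-form, whence $i\{\Theta v,v\}_h\le 0$ at $x_0$; as $x_0$ and $v$ are arbitrary, this is precisely Griffiths seminegativity in the sense of Definition \ref{DefSmPos}.

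For $(1)\Rightarrow(3)$, I would differentiate $\log|s|_h^2$ twice and combine with the identity above to obtain, on the open set $\{s\neq 0\}$,
\[
i\ddbar\,\log|s|_h^2 \;=\; \frac{-\,i\{\Theta s,s\}_h}{|s|_h^2} \;+\; \frac{i\{D's,D's\}_h\,|s|_h^2 - i\,\langle D's,s\rangle_h\wedge\overline{\langle D's,s\rangle_h}}{|s|_h^4}.
\]
The first summand is nonnegative by Griffiths seminegativity, and the second is nonnegative by the Cauchy--Schwarz inequality $|\langle D'_\xi s,s\rangle_h|^2\le |D'_\xi s|_h^2\,|s|_h^2$ applied to each tangent direction $\xi$. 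Thus $\log|s|_h^2$ is plurisubharmonic off the zero locus $Z(s)$; since $\log|s|_h^2$ is locally bounded above and tends to $-\infty$ along $Z(s)$, the standard removable-singularity theorem for plurisubharmonic functions lets it extend across $Z(s)$, which gives $(3)$ on the whole chart.

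The two points requiring genuine care are, first, fixing the curvature identity and its logarithmic version with the sign conventions adopted here for $h$ and $\Theta$ (a routine but sign-sensitive computation); and second, the implication $(1)\Rightarrow(3)$, where the Cauchy--Schwarz term is exactly what upgrades plurisubharmonicity of $|s|_h^2$ to that of $\log|s|_h^2$ and the behaviour along $Z(s)$ must be handled by the extension theorem rather than by direct computation. The normal-frame construction in $(2)\Rightarrow(1)$ is standard but is the conceptual heart of the argument, since it isolates the curvature term by killing the first-order part of $s$ at a point.
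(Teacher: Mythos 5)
Your proof is correct. The paper itself gives no proof of this lemma --- it is quoted from Raufi's paper \cite[Section 2]{Rau} --- and your argument (the identity $i\ddbar|s|_h^2=-i\{\Theta s,s\}_h+i\{D's,D's\}_h$, a normal frame at a point to isolate the curvature term for $(2)\Rightarrow(1)$, the Cauchy--Schwarz term for $(1)\Rightarrow(3)$, and the extension of plurisubharmonic functions across the analytic set $Z(s)$) is essentially the standard one found in that reference and in Berndtsson's work, so there is nothing to compare beyond noting the agreement.
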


\subsection{Singular Hermitian metrics on line bundles and plurisubharmonic functions}

On a line bundle, a singular Hermitian metric is also important. We begin with the definitions.
\begin{definition}[cf.\ \cite{DemAM}]\label{def:shmlb}
Let $L$ be a holomorphic line bundle on $X$. A {\it singular Hermitian metric} $h$ on $L$ is a measurable metric on $L$ with locally integrable weight function, i.e.\ locally $h$ has the form $|\cdot|^2_h =|\cdot|^2 e^{-\phi}$ for some $L^1_{loc}$ function $\phi$.
\end{definition}

We note that two singular Hermitian metrics $h, h'$ are equal when $\phi=\phi'$ holds for their local weight.
In this definition, the assumption that $\phi$ is locally integrable ensures the existence of the curvature current.
Formally we have $\Theta = -\partial \dbar \log h = \partial \dbar \phi$. Since $\phi$ is locally integrable, the right-hand side can be defined in the sense of currents.
We can also show that the right-hand side is independent of the choice of trivialization, thus the curvature current is globally well-defined.

A singular Hermitian metric $h$ on a line bundle $L$ is said to be {\it positively curved} if its curvature current $\Theta_h$ satisfies $\Theta_h \geq 0$ in the sense of currents.
Note that $h$ is positively curved if and only if its local weight $\phi$ is plurisubharmonic.

For a plurisubharmonic function $\phi$, the associated {\it multiplier ideal sheaf} $\mathcal{I}(\phi)$ is defined by
\[\mathcal{I}(\phi)(U) = \{f \in \mathcal{O}(U) ; |f|^2 e^{-\phi} \in L^1_{loc} (U) \}.\]
For a positively curved singular Hermitian metric $h = e^{-\phi}$, $\mathcal{I}(\phi)$ is independent of a choice of local trivialization.
Therefore $\mathcal{I}(h) := \mathcal{I}(\phi)$ is well-defined.
The coherence of $\mathcal{I}(h)$ is obtained by Nadel using $L^2$-methods:

\begin{theorem}[{\cite{Nad}, \cite[Proposition 5.7]{DemAM}}]\label{MultIdealIsCoh}
For any plurisubharmonic function $\phi$, $\mathcal{I}(\phi)$ is coherent. It follows that, for any positively curved singular Hermitian metric $h$, $\mathcal{I}(h)$ is coherent.
\end{theorem}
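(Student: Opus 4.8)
\emph{The statement is local and coherence is a local property, so I would fix a coordinate ball $\Omega = B(x_0,r) \subset \mathbb{C}^n$ on which $\phi$ is plurisubharmonic and prove that $\mathcal{I}(\phi)|_\Omega$ is coherent; the second assertion is then immediate, since for a positively curved singular metric $h$ one has $\mathcal{I}(h) = \mathcal{I}(\phi)$ for the local weight $\phi$.} The plan is to exhibit an explicit coherent subsheaf $\mathcal{J} \subseteq \mathcal{I}(\phi)$ and then to prove the reverse inclusion stalk by stalk using H\"ormander's $L^2$ estimate. Concretely, let $\mathcal{H} = \mathcal{O}(\Omega) \cap L^2(\Omega, e^{-\phi})$ be the space of holomorphic functions on $\Omega$ that are square integrable against $e^{-\phi}$, and let $\mathcal{J} \subseteq \mathcal{O}_\Omega$ be the subsheaf generated by the whole family $\mathcal{H}$. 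By the strong Noetherian property of coherent sheaves --- any subsheaf of $\mathcal{O}_\Omega$ generated by a family of sections is the directed union of the coherent subsheaves generated by its finite subfamilies, hence locally equals one of them --- the sheaf $\mathcal{J}$ is coherent. The inclusion $\mathcal{J} \subseteq \mathcal{I}(\phi)$ is immediate, because every $F \in \mathcal{H}$ satisfies $|F|^2 e^{-\phi} \in L^1_{loc}$, and tautologically $F_x \in \mathcal{J}_x$ for each $x$.

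The heart of the argument is the reverse inclusion $\mathcal{I}(\phi)_x \subseteq \mathcal{J}_x$ at each $x \in \Omega$, which I would obtain by showing that an arbitrary germ $f \in \mathcal{I}(\phi)_x$ is matched by a global $L^2$ section to arbitrarily high order. Fix an integer $s$, pick a cutoff $\theta$ equal to $1$ near $x$ and supported in a small ball about $x$, and solve $\dbar u = \dbar(\theta f)$ with respect to the plurisubharmonic weight $\psi = \phi + (n+s)\log|z-x|^2$; after adding the strictly plurisubharmonic term $|z|^2$, which is harmless on the bounded domain $\Omega$, H\"ormander's theorem yields a solution $u$ with $\int_\Omega |u|^2 e^{-\phi} |z-x|^{-2(n+s)}\, dV < \infty$. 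The relevant integral on the right is finite because $\dbar\theta$ is supported away from $x$, where $|z-x|^{-2(n+s)}$ is bounded and $|f|^2 e^{-\phi}$ is integrable. Now $F := \theta f - u$ is holomorphic, since $\dbar F = \dbar(\theta f) - \dbar u = 0$, and lies in $\mathcal{H}$; moreover $u$ is holomorphic near $x$ because $\dbar\theta \equiv 0$ there, and the finiteness of the weighted integral forces $u$ to vanish to order $\geq s+1$ at $x$ (a holomorphic germ with $\int |u|^2 |z-x|^{-2(n+s)}\,dV < \infty$ near $x$ must have vanishing order $> s$). Since $\theta \equiv 1$ near $x$, this gives $F \equiv f \bmod \mathfrak{m}_x^{s+1}$, and hence $\mathcal{I}(\phi)_x \subseteq \mathcal{J}_x + \mathfrak{m}_x^{s+1}\mathcal{O}_{X,x}$ for every $s$.

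Finally I would upgrade this ``agreement to high order'' to genuine equality by commutative algebra. Since $\mathcal{O}_{X,x}$ is a Noetherian local ring and $\mathcal{I}(\phi)_x \subseteq \mathcal{O}_{X,x}$ is an ideal, the Artin--Rees lemma provides an $s_0$ with $\mathfrak{m}_x^{s_0+1}\mathcal{O}_{X,x} \cap \mathcal{I}(\phi)_x \subseteq \mathfrak{m}_x \mathcal{I}(\phi)_x$. Applying the previous paragraph with $s = s_0$ and noting that the error $f - F$ equals $-u$ near $x$, hence is a germ in $\mathcal{I}(\phi)_x \cap \mathfrak{m}_x^{s_0+1}\mathcal{O}_{X,x}$, I obtain $\mathcal{I}(\phi)_x = \mathcal{J}_x + \mathfrak{m}_x \mathcal{I}(\phi)_x$. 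The finitely generated $\mathcal{O}_{X,x}$-module $\mathcal{I}(\phi)_x / \mathcal{J}_x$ is therefore killed by $\mathfrak{m}_x$ acting surjectively, so Nakayama's lemma gives $\mathcal{I}(\phi)_x = \mathcal{J}_x$. As $x$ was arbitrary and $\mathcal{J} \subseteq \mathcal{I}(\phi)$, the two sheaves coincide, and coherence of $\mathcal{J}$ yields coherence of $\mathcal{I}(\phi)$. I expect the main obstacle to be the $L^2$ step: choosing the singular weight $(n+s)\log|z-x|^2$ so that H\"ormander's estimate still applies (via the auxiliary strictly plurisubharmonic term) while simultaneously forcing the prescribed vanishing order on the solution $u$; once this analytic input is in place, the Artin--Rees--Nakayama endgame is routine.
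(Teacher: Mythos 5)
Your proposal is correct, and it coincides with the proof that the paper itself relies on: the paper does not reprove this theorem but cites Nadel \cite{Nad} and \cite[Proposition 5.7]{DemAM}, and your argument --- the coherent subsheaf $\mathcal{J}$ generated by the globally $L^2$ holomorphic functions (strong Noetherian property), the H\"ormander $\dbar$-step with weight $\phi + (n+s)\log|z-x|^2$ to match a given germ to arbitrarily high order, and the Artin--Rees/Nakayama endgame --- is exactly the argument of that cited proof. Nothing further is needed.
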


This theorem is obtained by using $L^2$-methods. The higher rank analogue of this theorem also holds under the assumption on the Nakano curvature condition which ensures that we can use $L^2$-methods (see Proposition \ref{cohvb}).

\section{Definition and examples of Singular Hermitian metrics on vector bundles}\label{SingHermMetonVB}
In this section, we introduce the notion of a singular Hermitian metric on a holomorphic vector bundle.
By de Cataldo, metrics approximated by smooth metrics in $C^2$-topology on an open set are considered \cite{deC}.
This approach is suitable to consider the Nakano positivity condition to use $L^2$-methods.
More general concept is given by Berndtsson and Paun in \cite{BP}, where all measurable metrics are considered.
We follow this approach and consider curvature conditions similar to \cite{deC} when applying $L^2$-methods.

\begin{definition}[{\cite[Section 3]{BP} and \cite[Section 1, Definition 1]{Rau}}]\label{shmdef}
Let $X$ be a complex manifold, and $E$ be a holomorphic vector bundle on $X$.
A {\it singular Hermitian metric} $h$ on $E$ is a collection of nonnegative Hermitian forms $h_x$ on $E_x$ for almost every $x \in X$ such that, for every holomorphic section $s,t$ of $E$, $\langle s,t \rangle_h$ is a measurable function.
In this paper, we admit the case $\det h \equiv 0$.

We also admit the case $h(s_x) = \infty$ for some $s_x \in E_x$. In this case, we assume that the set $\{x\in X ; h(s_x) = \infty \text{ for some } s_x \in E_x\}$ has zero measure. 
Here, a Hermitian metric on a complex vector space $V$ with values in $[0,\infty]$ is defined as follows:
there exists a subspace $V_0 \subset V$ which satisfies that $h|_{V_0}$ is a nonnegative ordinary Hermitian metric and $h(s) = \infty$ for all $s \in V \setminus V_0$.

Two singular Hermitian metrics $h, h'$ are said to be equal if $h=h'$ a.e.
\end{definition}

Because this definition is too general to deal with, we should restrict our interest to singular Hermitian metrics with appropriate curvature condition.
However, it seems difficult to define Chern curvature forms or currents for general singular Hermitian metrics (see Section 3 in \cite{Rau}).
Thus, alternatively, we use the characterization described in Lemma \ref{CharGrifNeg} to define the curvature condition of a singular Hermitian metric in the sense of Griffiths.

\begin{definition}[{\cite[Definition 3.1]{BP} and \cite[Section 1, Definition 2]{Rau}}]\label{Grifpos}
Let $h$ be a singular Hermitian metric on a holomorphic vector bundle $E$.\\
(1) $h$ is {\it negatively curved} (or {\it Griffiths seminegative}) if $|s|_h^2$ is plurisubharmonic for every local holomorphic section $s$ of $E$.\\
(2) $h$ is {\it positively curved} (or {\it Griffiths semipositive}) if the dual metric $h^*$ is well-defined and negatively curved.
\end{definition}

Note that, in our definition, the dual metric of $h$ is well-defined as a singular metric if ${\rm det} \ h \neq 0$ a.e.
Therefore we define the notion of a positively curved metric only for singular metrics with ${\rm det} \ h \neq 0$ a.e.
We can also define a Nakano negative singular Hermitian metric (cf.\ \cite{Rau}) although we will not use the Nakano negativity in this paper.
If $L$ is a line bundle with a singular Hermitian metric $h$, the positivity conditios in Definition \ref{Grifpos} and after Definition \ref{def:shmlb} coinside.

As a higher rank analogue of multiplier ideal sheaves, we define a subsheaf $E(h)$ of $\mathcal{O}(E)$ as follows.

\begin{definition}[\cite{deC}, Definition 2.3.1]\label{MultSubSheaf}
For a singular Hermitian metric $h$ on $E$, $E(h)$ denotes the sheaf of locally square integrable holomorphic sections of $E$ with respect to $h$, i.e.\ holomorphic sections $s \in \mathcal{O}(E)$ such that $|s|^2_h \in L^1_{loc}$.
\end{definition}

If $L$ is a line bundle, $L(h) = L \otimes \mathcal{I}(h)$.
In the line bundle case, $\mathcal{I}(h)$ is coherent for positively curved $h$ (Theorem \ref{MultIdealIsCoh}).
In the vector bundle case, we also have that $E(h)$ is coherent under an assumption related to Nakano positivity.
The precise statement is as follows. 

\begin{proposition}[\cite{deC}, Proposition 4.1.3]\label{cohvb}
Let $h$ be a singular Hermitian metric on $E$.
Assume that locally there exists a sequence of smooth Hermitian metrics $\{h_j\}$ satisfying
$h_j \uparrow h$ pointwise and $\Theta_{h_j} >_{\rm Nak} \gamma \otimes {\rm Id}_E$, where $\gamma$ is a fixed continuous (1,1)-form.
Then we have that $E(h)$ is coherent.
\end{proposition}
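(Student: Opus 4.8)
The plan is to adapt Nadel's proof of coherence of multiplier ideal sheaves (Theorem \ref{MultIdealIsCoh}) to the vector bundle setting, replacing H\"ormander's estimate by the Nakano-type $L^2$ existence theorem for $\dbar$. Since coherence is local, I would fix $x_0 \in X$ and a small pseudoconvex coordinate ball $\Omega \ni x_0$ on which the approximating sequence $\{h_j\}$ with $h_j \uparrow h$ and $\Theta_{h_j} >_{\rm Nak} \gamma \otimes {\rm Id}_E$ is given, together with a holomorphic trivialization of $E$ and of $K_\Omega$, and a smooth volume form $dV$. Let $\mathcal{H}(\Omega)$ denote the Hilbert space of holomorphic sections $s$ of $E$ over $\Omega$ with $\int_\Omega |s|^2_h \, dV < \infty$, and let $\mathcal{F} \subset \mathcal{O}(E)|_\Omega$ be the $\mathcal{O}_\Omega$-subsheaf generated by $\mathcal{H}(\Omega)$. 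The goal is to show that $\mathcal{F}$ is coherent and that $\mathcal{F} = E(h)$ on $\Omega$.

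First I would record the two easy facts. The sheaf $\mathcal{F}$ is coherent by the Noetherian property of the coherent sheaf $\mathcal{O}(E)$: choosing an orthonormal basis $(\sigma_k)_k$ of $\mathcal{H}(\Omega)$, the coherent subsheaves $\mathcal{F}_N$ generated by $\sigma_1,\dots,\sigma_N$ form an ascending chain, which is locally stationary, so $\mathcal{F}$ is locally finitely generated and hence coherent. The inclusion $\mathcal{F} \subset E(h)$ is immediate, since each $\sigma_k$ satisfies $|\sigma_k|^2_h \in L^1_{loc}$ and multiplication by a germ of holomorphic function (locally bounded) preserves local square-integrability.

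The real content is the reverse inclusion $E(h) \subset \mathcal{F}$. Here I would invoke the algebraic fact (Krull / Artin--Rees) that a submodule of the finitely generated $\mathcal{O}_{X,x_0}$-module $\mathcal{O}(E)_{x_0}$ is closed for the $\mathfrak{m}_{x_0}$-adic topology, so that $\mathcal{F}_{x_0} = \bigcap_{s \geq 1}\bigl(\mathcal{F}_{x_0} + \mathfrak{m}_{x_0}^s \mathcal{O}(E)_{x_0}\bigr)$. Thus it is enough to prove that a given germ $f \in E(h)_{x_0}$ lies in $\mathcal{F}_{x_0} + \mathfrak{m}_{x_0}^s \mathcal{O}(E)_{x_0}$ for every $s$. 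To do so I would pick a cutoff $\theta$ equal to $1$ near $x_0$ and supported in a small ball $B$ on which $f$ is holomorphic and $L^2$ with respect to $h$, so that $\dbar(\theta f)$ vanishes near $x_0$, and solve $\dbar u = \dbar(\theta f)$ for an $E$-valued form, working on $(E,h_j)$ twisted by the trivial line bundle with the plurisubharmonic weight $(n+s)\log|z-x_0|^2$ (plus a small strictly plurisubharmonic term to absorb $\gamma$ and restore strict Nakano positivity). Using the trivialization of $K_\Omega$ to pass to the standard $(n,1)$ formulation, the Bochner--Kodaira--Nakano / Demailly $L^2$ existence theorem then produces a solution $u_j$ with $\int_B |u_j|^2_{h_j}\, |z-x_0|^{-2(n+s)}\, dV$ controlled by $\int_B |\dbar(\theta f)|^2_{h_j}\, |z-x_0|^{-2(n+s)}\, dV$, which is finite because $\dbar(\theta f)$ vanishes near $x_0$. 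The finiteness of the weighted norm of $u_j$ forces the (holomorphic near $x_0$) section $u_j$ to vanish to high order at $x_0$, so that $u_j \in \mathfrak{m}_{x_0}^s \mathcal{O}(E)_{x_0}$, and $F_j := \theta f - u_j$ is a holomorphic section with $F_j \equiv f \pmod{\mathfrak{m}_{x_0}^s}$.

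The hard part, and the reason the hypothesis is stated in terms of Nakano rather than Griffiths curvature, is the uniformity in $j$ and the passage to the singular limit. Because $h$ has no curvature in the usual sense, all solving must be carried out for the smooth metrics $h_j$, and the $L^2$ bounds must be uniform in $j$ so that a weak limit $u = \lim u_j$ exists and remains square-integrable against the limit metric $h$. This is precisely where $\Theta_{h_j} >_{\rm Nak} \gamma \otimes {\rm Id}_E$, with $\gamma$ fixed independently of $j$, is indispensable: it yields a $j$-independent lower bound for the curvature term in the Bochner--Kodaira--Nakano identity, hence a $j$-independent constant in the $L^2$ estimate; a Griffiths bound alone would not suffice, as the existence theorem rests on Nakano positivity. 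Combining the uniform bound with $h_j \uparrow h$ and monotone convergence, I would extract $F = \theta f - u \in \mathcal{H}(\Omega)$ with $F \equiv f \pmod{\mathfrak{m}_{x_0}^s}$, giving $f \in \mathcal{F}_{x_0} + \mathfrak{m}_{x_0}^s \mathcal{O}(E)_{x_0}$. Since $s$ is arbitrary, the Krull closedness forces $f \in \mathcal{F}_{x_0}$, establishing $E(h) = \mathcal{F}$ and therefore the coherence of $E(h)$.
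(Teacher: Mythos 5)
The paper does not prove Proposition \ref{cohvb} at all: it is quoted verbatim from de Cataldo's paper (Proposition 4.1.3 of \cite{deC}), and your outline reconstructs exactly the argument used there and in Nadel's line-bundle case --- strong Noetherian property for the sheaf generated by the $L^2$ sections, Krull intersection to reduce to membership modulo $\mathfrak{m}_{x_0}^s$, a $\dbar$-solution with weight $(n+s)\log|z-x_0|^2$ via the Nakano-type $L^2$ existence theorem applied to the approximants $h_j$, and a uniform-in-$j$ estimate from the fixed lower bound $\gamma$ followed by monotone convergence. Your approach is correct and is essentially the same as that of the cited source.
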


This proposition also holds under the weaker assumption that the curvature of $h_j$ are uniformly bounded from below in the sense of Nakano, i.e.\ $h_j \geq_{\rm Nak} -C\omega$ for some constant $C$ and a Hermitian form $\omega$ on $X$.

To construct positively (or negatively) curved metrics, the following lemma is useful.

\begin{lemma}\label{PullbackofGrifNeg}
Let $E,F$ be vector bundles on $X$, and let $\phi: \mathcal{O}(E) \to \mathcal{O}(F)$ be a sheaf homomorphism.

$(1)$ Let $h$ be a negatively curved singular Hermitian metric on $F$.
Then, a singular Hermitian metric $\phi^* h$ on $E$ defined by
\[\langle s,t \rangle_{\phi^* h}=
\langle \phi(s),\phi(t) \rangle_{h}\]
is also negatively curved.

$(2)$ If $\phi: \mathcal{O}(E) \to \mathcal{O}(F)$ is surjective on an open set $U$ of full Lebesgue measure and $h$ is a singular Hermitian metric on $E$, then the quotient metric on $F$ is well-defined as a singular Hermitian metric.
Moreover, if $h$ is positively curved, so is the quotient metric.
\end{lemma}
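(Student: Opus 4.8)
The plan is to dispatch part $(1)$ directly from the definition of negative curvature, and then to obtain the positivity claim in part $(2)$ from part $(1)$ by duality. For part $(1)$, I would first use that the sheaf homomorphism $\phi\colon\mathcal{O}(E)\to\mathcal{O}(F)$ is induced by a holomorphic bundle morphism, so that for each $x$ it gives a $\mathbb{C}$-linear map $\phi_x\colon E_x\to F_x$; the assignment $\langle u,w\rangle_{\phi^*h_x}:=\langle\phi_x(u),\phi_x(w)\rangle_{h_x}$ is then the pullback of the nonnegative Hermitian form $h_x$, hence again a nonnegative Hermitian form for almost every $x$ (the null set where $h$ takes the value $\infty$ causes no trouble). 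That $\phi^*h$ is a singular Hermitian metric in the sense of Definition \ref{shmdef} follows because $\phi(s),\phi(t)$ are holomorphic sections of $F$ whenever $s,t\in\mathcal{O}(E)$, so $\langle s,t\rangle_{\phi^*h}=\langle\phi(s),\phi(t)\rangle_h$ is measurable since $h$ is. The curvature statement is then immediate: for any local holomorphic $s$ we have $|s|^2_{\phi^*h}=|\phi(s)|^2_h$, and since $\phi(s)\in\mathcal{O}(F)$ and $h$ is negatively curved, $|\phi(s)|^2_h$ is plurisubharmonic; thus $|s|^2_{\phi^*h}$ is plurisubharmonic for every $s$, which is exactly the negativity of $\phi^*h$.

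For part $(2)$, on $U$ (where $\phi_x$ is surjective) I would define the quotient metric fiberwise by
\[|v|^2_{h_Q}:=\inf\{\,|u|^2_h : u\in E_x,\ \phi_x(u)=v\,\},\]
which, since $U$ has full measure, determines $h_Q$ as a family of nonnegative Hermitian forms almost everywhere. The delicate point for well-definedness is measurability: after fixing holomorphic frames, $\phi$ becomes a continuous surjective matrix $P$ and $h$ a measurable matrix $H\ge 0$, and the quotient form is a measurable function of $(P,H)$ — equal to $(PH^{-1}P^*)^{-1}$ where $H$ is invertible, and in general the limit as $\varepsilon\downarrow 0$ of the measurable expressions $(P(H+\varepsilon I)^{-1}P^*)^{-1}$, which decrease to the infimum above. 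After polarization this gives measurability of $\langle v,w\rangle_{h_Q}$, so $h_Q$ is a genuine singular Hermitian metric.

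For the positivity assertion I would argue by duality rather than directly. If $h$ is positively curved then $\det h\ne 0$ a.e.\ and $h^*$ is a well-defined negatively curved metric on $E^*$. The transpose $\phi^t\colon\mathcal{O}(F^*)\to\mathcal{O}(E^*)$ is the injective sheaf homomorphism induced by the inclusion $F^*\hookrightarrow E^*$, and the crucial ingredient is the standard subbundle/quotient duality
\[(h_Q)^*=(\phi^t)^*h^*,\]
i.e.\ the dual of the quotient metric is the restriction of $h^*$ to $F^*$. Granting this pointwise identity, part $(1)$ applied to the pair $(\phi^t,h^*)$ shows that $(\phi^t)^*h^*=(h_Q)^*$ is negatively curved; since surjectivity of $\phi$ together with $\det h\ne 0$ a.e.\ gives $\det h_Q\ne 0$ a.e., the metric $(h_Q)^*$ really is the dual of $h_Q$, so by Definition \ref{Grifpos}$(2)$ this says precisely that $h_Q$ is positively curved.

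The routine curvature bookkeeping is not where the difficulty lies; the two genuine obstacles are establishing measurability of $h_Q$ for a possibly degenerate $h$ (handled by the regularization/pseudoinverse formula above) and verifying the duality identity $(h_Q)^*=(\phi^t)^*h^*$ as a pointwise statement in linear algebra. Once both are secured, the negativity of the pullback and the positivity of the quotient are formal consequences of the definitions together with part $(1)$.
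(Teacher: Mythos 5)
Your proposal is correct and follows essentially the same route as the paper: part (1) is the direct observation that $|s|^2_{\phi^*h}=|\phi(s)|^2_h$ is plurisubharmonic, and part (2) deduces positivity of the quotient metric from (1) via the duality identity identifying $(h_Q)^*$ with the restriction $(\phi^t)^*h^*$ of the dual metric. The only difference is that you supply more detail on the measurability of the quotient metric (via the regularization $(P(H+\varepsilon I)^{-1}P^*)^{-1}$), which the paper asserts without elaboration.
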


\begin{proof}
$(1)$ For a holomorphic section $s$ of $E$, we have that $\phi(s)$ is a holomorphic section of $F$. Since $h$ is negatively curved, $|s|_{\phi^* h} = |\phi(s)|_h$ is plurisubharmonic.

$(2)$ We have that the quotient (singular) metric is well-defined on $F|_U$.
Since $U$ is an open set with full measure, we have a singular Hermitian metric on $F$.
If $h$ is positively curved, we have that the dual metric $h^*$ is a negatively curved singular Hermitian metric on $E^*$.
The quotient metric coincides with the dual of $(\phi^*)^*h^*$, where $\phi^* : \mathcal{O}(F^*) \to \mathcal{O}(E^*)$ is the dual of $\phi$.
It follows that the quotient metric is positively curved by (1).
\end{proof}

\begin{example}\label{inducedbysections}
We can construct a positively curved singular Hermitian metric using given global sections of a vector bundle.
Let $s_1, s_2, \ldots, s_N \in H^0(X,E)$ be global sections of a vector bundle $E$ on a complex manifold $X$.
Then we have the following morphism of bundles
\[\phi: X \times \mathbb{C}^N \to E\]
sending $(x,(a_1,a_2,\ldots,a_N))$ to $\sum a_i s_i(x)$.
We assume that there exists a Zariski open set $U$ on which these $\{s_i\}$ generate each fiber of $E$ (we refer to this condition as that these $\{s_i\}$ {\it generically generate} $E$).
Then that $\phi$ is surjective on $U$.
Thus, by Lemma \ref{PullbackofGrifNeg}, the quotient metric $h$ of the standard metric on $\mathbb{C}^N$ is a positively curved singular Hermitian metric on $E$.

In Proposition \ref{coh}, we prove that $E(h)$ is coherent.
Note that we do not know that $h$ can be approximated by smooth metrics with Nakano positive curvature (or metrics which have Nakano curvature bounded from below).
In Example \ref{exampleapproximate}, we will show an example of this kind of metric which does not seem to satisfy Nakano curvature condition.
This type of singular Hermitian metrics is also investigated in \cite[Example 3.6]{Y}.
\end{example}

\begin{example}\label{ToricExample}
We shall show that there are positively curved singular Hermitian metrics on tangent bundles of toric varieties.

Let $X$ be a toric manifold. By definition, there is a inclusion $(\mathbb{C}^*)^n \subset X$ and action $(\mathbb{C}^*)^n \curvearrowright X$.
Considering the differentiation of the family of actions $(e^{i^\theta},1,\ldots,1)$ at $\theta = 0$, we have a holomorphic vector field on $X$.
Similarly, we have $n = \dim X$ vector fields which generates $TX$ on $(\mathbb{C}^*)^n$.
Therefore, by Example \ref{inducedbysections}, we can construct a positively curved singular Hermitian metric on $TX$
Here, we can also use toric Euler sequence (cf.\ \cite[Theorem 8.1.6]{CLS} to construct such metrics on $TX$.

In particular, we have a positively curved singular Hermitian metric on $T_X$ on the one-point blow-up $X = {\rm Bl}_P \mathbb{P}^2$ of $\mathbb{P}^2$ at $P \in \mathbb{P}^2$. Note that $X$ is isomorphic to a toric variety associated to a complete fan $\Sigma$ defined by four rays $\mathbb{R}_+ (0,1), \mathbb{R}_+ (1,1), \mathbb{R}_+ (1,0), \mathbb{R}_+ (-1,-1)$. This gives an example of a positively curved singular Hermitian metric on a vector bundle with no smooth Griffiths semipositive Hermitian metrics.
Indeed, the exceptional divisor $C$ on $X$ is a $(-1)$-curve. Then we have an exact sequence
\[0 \longrightarrow T_C \longrightarrow  T_X|_C \longrightarrow N_{C/X} \longrightarrow 0\]
on $C$.
If there is a smooth Hermitian metric on $T_X$ with Griffiths semipositive curvature, its restriction to $C$ is also semipositive. Then we have a semipositive metric on $N_{C/X} \cong \mathcal{O}(-1)$, which is a contradiction.
\end{example}

\begin{example}
Let $E$ be a holomorphic vector bundle on $X$.
We denote by $\mathbb{P}(E)$ the projective bundle of hyperplanes of $E$ and by $\mathcal{O}(1) = \mathcal{O}_{\mathbb{P}(E)}(1)$ the tautological line bundle on $\mathbb{P}(E)$. We denote $\mathcal{O}(k):=\mathcal{O}(1)^{\otimes k}$.
Let $h$ be a singular Hermitian metric on a line bundle $\mathcal{O}(1)$.
Assume that $h|_{\mathbb{P}(E)_x}$ is bounded for almost all $x \in X$.
Then we can construct a singular Hermitian metric $h'$ on $E \otimes \det E$ as follows.
First, we define a vector space $F_x$, $x \in X$, by
\[F_x = H^0\left(\mathbb{P}(E)_x, \left(\mathcal{O}(r+1)\otimes K_{\mathbb{P}(E)/X}\right) |_{\mathbb{P}(E)_x}\right).\]
Then the collection $\{F_x\}_{x \in X}$ forms a holomorphic vector bundle $F$.
By computation of transition functions, we have that $F$ is isomorphic to $E \otimes \det E$.
We define a singular Hermitian metric $h'$ on $F$ by
\[\langle s,t \rangle_{h'} = \int_{\mathbb{P}(E)_x} \{s,t\}_{h^{r+1}}\]
for $s, t \in F_x$.
Here, we use the notation $\{\cdot,\cdot\}$ defined in Section 2.1.
Recall that when $s\otimes\alpha$ is an $E$-valued $p$-form and $t\otimes\beta$ is an $E$-valued $q$-form, the product $\{s\otimes\alpha,t\otimes\beta\}_h$ is a (scalar-valued) $(p+q)$-form.
In this situation, We consider $s, t$ as $\mathcal{O}(r+1)$-valued $(r-1,0)$-forms on the fiber $\mathbb{P}(E)_x$, then $\{s,t\}_{h^{r+1}}$ is a $(r-1,r-1)$-form.
Therefore we can integrate $\{s,t\}_{h^{r+1}}$ on the fiber $\mathbb{P}(E)_x$.

When $h$ is smooth and semipositive, it is known that $h'$ is Nakano semipositive (cf.\ \cite{Ber}, \cite{LSY}).
We want to show that {\it if $h$ is a semipositive singular Hermitian metric, we can locally approximate $h'$ by Nakano semipositive smooth Hermitian metrics}.
Let $U \subset X$ be a small open set. We assume that $E$ is trivial on $U$, then we can write $E = U \times \mathbb{C}^r$ and $\mathbb{P}(E) = U \times \mathbb{P}^{r-1}$.
The line bundle $\mathcal{O}(1)$ admits a smooth positive Hermitian metric on $U \times \mathbb{P}^{r-1}$.
By the argument similar to the proof of \cite[Theorem 1]{BK}, we can construct an approximation of $h$ by smooth semipositive metrics $\{h_j\}$ on $V \times \mathbb{P}^{r-1}$, where $V$ is a relatively compact subset of $U$.
Each $h_j$ induces a smooth Hermitian metric $h'_j$ on $E \otimes \det E |_V$, which is known to be Nakano semipositive.
Thus we can construct a local approximation of $h'$ by Nakano semipositive smooth Hermitian metrics.
\end{example}

\section{Singular Hermitian metrics induced by global sections}\label{SingHermMetbyGlobalSections}

In this section, we study singular Hermitian metrics on vector bundles which are induced by holomorphic sections (Example \ref{inducedbysections}). 

\begin{proposition}\label{coh}
Let $X$ be a complex manifold, $E$ be a holomorphic vector bundle on $X$, and $s_1, s_2,\ldots,s_N \in H^0(X,E)$ be holomorphic sections.
Assume that there exists an open dense set $U$ such that, for every $x\in U$,  a fiber $E_x$ over $x$ is generated by these $\{s_i(x)\}_i$.
Define a morphism of vector bundles
\[\phi: X \times \mathbb{C}^N \to E\]
by sending $(x,(a_1,a_2,\ldots,a_N))$ to $\sum a_i s_i(x)$.
By assumption, $\phi$ is surjective on a Zariski open set.
Let $h_0$ be a smooth Hermitian metric on $\mathbb{C}^N$ and $h$ be the quotient metric of $h_0$ induced by $\phi$ (When $h_0$ is the standard Euclidean metric on $\mathbb{C}^N$, it is constructed in Example \ref{inducedbysections}).
Then, the sheaf $E(h)$ of locally square integrable sections of $E$ with respect to $h$ is a coherent subsheaf of $\mathcal{O}(E)$.
\end{proposition}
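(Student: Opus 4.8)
The plan is to reduce the statement to the coherence of an ordinary Nadel multiplier ideal sheaf by writing the quotient metric completely explicitly in a trivialization. Since coherence is local, I would work on a small open set $V$ over which $E$ is trivial, $E|_V \cong V \times \mathbb{C}^r$, and represent $\phi$ by a holomorphic $r \times N$ matrix $A = A(x)$, so that $\phi(x,a) = A(x)a$; the hypothesis then says $A$ has full rank $r$ precisely on $U \cap V$, a set whose complement is a proper analytic (hence measure-zero) subset. First I would dispose of the general smooth $h_0$: on a relatively compact subset one has $c_1 \,\mathrm{Id} \le h_0 \le c_2 \,\mathrm{Id}$, and the quotient norm $|s|^2_h = \inf_{\phi(a)=s} |a|^2_{h_0}$ is monotone under comparison of the metric upstairs, so $c_1 |s|^2_{h^{\mathrm{euc}}} \le |s|^2_h \le c_2 |s|^2_{h^{\mathrm{euc}}}$ where $h^{\mathrm{euc}}$ is the quotient of the Euclidean metric. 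Hence $E(h) = E(h^{\mathrm{euc}})$ as subsheaves of $\mathcal{O}(E)$, and I may assume $h_0$ is Euclidean.

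For the Euclidean quotient metric, the norm of $s$ is the squared length of the minimal lift, which is classically $|s|^2_h = {}^t\!\bar s\,(A A^*)^{-1} s$ wherever $A$ has full rank (here $A^* = {}^t\!\bar A$). The key computational step is to rewrite this ratio of a quadratic form and a determinant as a ratio of sums of squared moduli of \emph{holomorphic} functions. By the Cauchy--Binet formula the denominator is $\det(AA^*) = \sum_{|I|=r} |g_I|^2$, where $g_I = \det A_I$ runs over the maximal minors of $A$; these are holomorphic and their common zero set is exactly $V \setminus U$. Applying Cauchy--Binet to the cofactors of $AA^*$ in the same way, I would establish the identity
\[
{}^t\!\bar s\,\mathrm{adj}(AA^*)\,s = \sum_{|K|=r-1} |P_K(s)|^2,
\qquad P_K(s) := \det\,[\,A_K \mid s\,],
\]
where $[\,A_K \mid s\,]$ is the $r \times r$ matrix whose columns are the columns of $A$ indexed by $K$ together with the adjoined column $s$. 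Each $P_K$ is holomorphic and $\mathcal{O}_V$-linear in $s$, being a determinant linear in its last column. Combining the two identities yields the clean formula
\[
|s|^2_h = \Bigl(\textstyle\sum_{|K|=r-1} |P_K(s)|^2 \Bigr)\, e^{-\psi},
\qquad \psi := \log \textstyle\sum_{|I|=r} |g_I|^2,
\]
and $\psi$ is plurisubharmonic as the logarithm of a sum of squared moduli of holomorphic functions.

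With this formula the integrability condition decouples: since the sum over $K$ is finite with nonnegative terms, $|s|^2_h \in L^1_{\mathrm{loc}}$ if and only if $|P_K(s)|^2 e^{-\psi} \in L^1_{\mathrm{loc}}$ for every $K$, i.e.\ if and only if $P_K(s) \in \mathcal{I}(\psi)$ for every $K$ with $|K|=r-1$. Thus
\[
E(h)|_V = \bigcap_{|K|=r-1} P_K^{-1}\bigl(\mathcal{I}(\psi)\bigr).
\]
Now $\mathcal{I}(\psi)$ is coherent by Theorem \ref{MultIdealIsCoh}, each $P_K \colon \mathcal{O}(E)|_V \to \mathcal{O}_V$ is a morphism of coherent sheaves, and $P_K^{-1}(\mathcal{I}(\psi))$ is the kernel of the composite $\mathcal{O}(E)|_V \xrightarrow{P_K} \mathcal{O}_V \to \mathcal{O}_V/\mathcal{I}(\psi)$, hence coherent; a finite intersection of coherent subsheaves is again coherent (it is the kernel of $\mathcal{O}(E)|_V \to \bigoplus_K \mathcal{O}_V/\mathcal{I}(\psi)$). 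Therefore $E(h)|_V$ is coherent, and since $V$ was arbitrary, so is $E(h)$.

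The step I expect to be the main obstacle is the exact algebraic identity for the numerator: one must run Cauchy--Binet on the cofactor matrix of $AA^*$ and track signs so that the Hermitian form ${}^t\!\bar s\,\mathrm{adj}(AA^*)\,s$ assembles precisely into $\sum_K |\det[\,A_K \mid s\,]|^2$, the point being that cofactor expansion along the adjoined column is what produces $P_K(s)$. Everything else is either routine (the reduction to the Euclidean metric, the standard minimal-lift formula) or a direct appeal to Nadel's coherence theorem together with the stability of coherence under preimages and finite intersections. I would also note that this explicit description makes transparent why $E(h)$ can be strictly larger than the image subsheaf $\phi(\mathcal{O}^N) \subset \mathcal{O}(E)$, the discrepancy entering exactly through the rounding behavior of $\mathcal{I}(\psi)$.
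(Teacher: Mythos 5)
Your proposal is correct and follows essentially the same route as the paper: reduce to the Euclidean $h_0$ by two-sided comparison of metrics, derive the explicit formula $|s|^2_h = \bigl(\sum_K |\det[A_K\mid s]|^2\bigr)/\bigl(\sum_I|\det A_I|^2\bigr)$ (the paper proves the Cauchy--Binet identities by hand via the dual metric and a permutation expansion, where you cite them), and then identify $E(h)$ locally as a finite intersection of preimages of the Nadel multiplier ideal $\mathcal{I}(\psi)$ under the $\mathcal{O}$-linear maps $P_K$. The only cosmetic difference is the order of steps (you dispose of general $h_0$ first, the paper last) and the packaging of the determinant computation.
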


\begin{remark}\label{remNak}
By Proposition \ref{cohvb}, $E(h)$ is coherent if $h$ can be approximated by smooth metrics $\{h_j\}$ such that the curvature is uniformly bounded below in the sense of Nakano (i.e. there is a constant $C>0$ satisfies
\[\Theta_{h_j} \geq_{\rm Nak} -C \omega \otimes {\rm Id}_E,\]
where $\omega$ denotes a fixed Hermitian form on $X$). Here, some metrics defined in Example \ref{inducedbysections} seem hard to approximate in such a manner. See Example \ref{exampleapproximate} after the proof.
\end{remark}

\begin{proof}
First, we consider the case that $h_0$ is the standard Euclidean metric.
To prove the proposition, we calculate the value of $|s|^2_h$ explicitly, and use the result in the line bundle case.
Since the statement is local, it is sufficient to show the proposition on a small open set $U$. We assume that $E$ is trivialized on $U$ by a holomorphic frame $e_1, e_2, \ldots, e_r$ on $U$.
We regard each $s_i = \sum_j f_{i,j} e_j$ as a column vector ${}^t(f_{i,1}, f_{i,2}, \ldots, f_{i,r})$.
For $s = {}^t(f_1,f_2,\ldots,f_r)$, we will show the following equation.

\begin{lemma}
\[|s|^2_h =
\frac{
\sum_{1\leq i_1<i_2< \cdots <i_{r-1}\leq N} \left|\det
(s \, s_{i_1} \, s_{i_2} \cdots s_{i_{r-1}} )
\right|^2
}{
\sum_{1\leq j_1<j_2< \cdots <j_{r}\leq N} \left|\det
(s_{j_1} \, s_{j_2} \cdots s_{j_r} )
\right|^2}.
\]
\end{lemma}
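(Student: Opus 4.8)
The plan is to compute $|s|^2_h$ explicitly as a Hermitian quadratic form in $s$, and then to recognize the numerator and denominator of the claimed fraction as $\,{}^t\overline{s}\,\operatorname{adj}(G)\,s\,$ and $\det G$ respectively, where $G$ is the Gram matrix of the generators; the identity then follows from $\operatorname{adj}(G)=(\det G)\,G^{-1}$. First I would set up notation on $U$, where the $s_i(x)$ span $E_x$. Write $M=(s_1\ s_2\cdots s_N)$ for the $r\times N$ holomorphic matrix whose columns are the $s_i$ in the chosen frame, so that $\phi_x(a)=Ma$. By definition of the quotient metric,
\[
|s|^2_h=\min\bigl\{\,{}^t\overline{a}\,a : a\in\mathbb{C}^N,\ Ma=s\,\bigr\},
\]
the minimum being attained at the unique preimage of $s$ orthogonal to $\ker\phi_x$. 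On $U$ the matrix $M$ has full rank $r$, so $G:=M\,{}^t\overline{M}$ is an invertible positive Hermitian $r\times r$ matrix, and a standard computation with the minimal-norm (pseudoinverse) solution $a={}^t\overline{M}\,G^{-1}s$ gives
\[
|s|^2_h={}^t\overline{s}\,G^{-1}\,s.
\]

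The denominator is then immediate from the Cauchy--Binet formula: the $r\times r$ minors of $M$ are exactly the $\det(s_{j_1}\cdots s_{j_r})$, so $\det G=\det\bigl(M\,{}^t\overline{M}\bigr)=\sum_{j_1<\cdots<j_r}\lvert\det(s_{j_1}\cdots s_{j_r})\rvert^2$. For the numerator I would expand each $\det(s\ s_{i_1}\cdots s_{i_{r-1}})$ along its first column $s={}^t(f_1,\ldots,f_r)$, writing it as $\sum_k f_k\,C_{I,k}$, where $C_{I,k}=(-1)^{k+1}$ times the $(r-1)\times(r-1)$ minor of the columns $I=(i_1,\ldots,i_{r-1})$ with row $k$ deleted. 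Squaring and summing over $I$ turns the numerator into the Hermitian form $\sum_{k,l}f_k\,\overline{f_l}\,P_{kl}$ with $P_{kl}=\sum_I C_{I,k}\,\overline{C_{I,l}}$. Applying Cauchy--Binet a second time, now to the product of $M_{\hat k}$ (the rows of $M$ other than row $k$) with ${}^t\overline{M_{\hat l}}$, identifies $\sum_I C_{I,k}\,\overline{C_{I,l}}$ with $(-1)^{k+l}\det\bigl(G_{\hat k\hat l}\bigr)$, where $G_{\hat k\hat l}$ is $G$ with row $k$ and column $l$ removed. These are precisely the cofactors of $G$, so $P={}^t\!\operatorname{adj}(G)$ and hence the numerator equals ${}^t\overline{s}\,\operatorname{adj}(G)\,s$. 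Dividing by $\det G$ yields ${}^t\overline{s}\,G^{-1}s=|s|^2_h$, which is the asserted formula.

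The computation is elementary once organized this way, and I expect the only genuine obstacle to be the bookkeeping in the second Cauchy--Binet step: one must match the cofactor expansion of $\det(s\ s_I)$ in $s$ with the cofactor entries of $G$, keeping the signs $(-1)^{k+l}$ and the row/column deletions consistent, so that $P$ comes out to be exactly the (transposed) adjugate rather than some permuted or conjugated variant. A small technical point to record separately is that the identity is derived only on the full-rank locus $U$; since $X\setminus U$ is a proper analytic, hence measure-zero, set and singular Hermitian metrics are considered up to equality almost everywhere, this suffices to determine $|s|^2_h$ as an equality of functions on all of $X$, which is exactly what is needed to feed the result into the line-bundle coherence argument.
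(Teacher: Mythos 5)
Your proof is correct and follows essentially the same route as the paper's: both identify the inverse of the metric matrix with the Gram matrix $G=M\,{}^t\overline{M}$ of the sections, compute $\det G$ as the sum of squared maximal minors, and reduce the claimed identity to the fact that the entries of $(\det G)\,G^{-1}$ are cofactors of $G$. The differences are only organizational --- you invoke Cauchy--Binet by name and treat general $s$ via the adjugate, whereas the paper expands the determinant sums by hand and reduces to $s=(1,0,\ldots,0)$ by a change of basis, and you obtain $|s|^2_h={}^t\overline{s}\,G^{-1}s$ from the minimal-norm characterization of the quotient metric rather than by passing through the dual metric.
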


\begin{proof}
We recall that $h$ is a quotient metric of the standard metric on $X \times \mathbb{C}^N$ induced by $\phi: X \times \mathbb{C}^N \to E$.
By considering the dual, $h$ can be regarded as the dual metric of the restriction of the standard metric via $\phi^*: E^* \to \mathbb{C}^N$. We denote the dual of $h$ by $h^*$.
We have that $\phi^*(\xi) = (\xi(s_1),\cdots,\xi(s_N))$ for $\xi \in E^*$.
Therefore the norm of $\xi$ with respect to $h^*$ is 
\[|\xi|^2_{h^*} = \sum_{i=1}^N |\xi(s_i)|^2,\]
thus the matrix representation of $h^*$ is as follows:
\[h^*_{jk} = \sum_{i=1}^N f_{i,j}\overline{f_{i,k}}.\]
Since the dual metric $h^*$ can be represented as ${}^th^{-1}$, we have that
\[h^{-1}_{jk} = \sum_{i=1}^N \overline{f_{i,j}}f_{i,k}.\]
Now we shall prove that $\det (h^{-1})$ is the denominator of the right hand side of the equation above.
We have that
\begin{align*}
\det h^{-1} &= \sum_{\sigma \in S_r} {\rm sgn} (\sigma) \prod_{j=1}^r h^{-1}_{j\sigma(j)}\\
&= \sum_{\sigma \in S_r} {\rm sgn} (\sigma) \prod_{j=1}^r\sum_{i_j=1}^N\overline{f_{i_j,j}}f_{i_j,\sigma(j)}.
\end{align*}
Extending the product, it follows that
\begin{align*}
&= \sum_{i_1,\ldots,i_r=1}^{N} \sum_{\sigma \in S_r}{\rm sgn}(\sigma) \overline{f_{i_1,1}}f_{i_1,\sigma(1)}\cdots \overline{f_{i_r,r}}f_{i_r,\sigma(r)}.
\end{align*}
When some two of $i_1,\ldots,i_r$ are the same, cancellation occurs in the sum $\sum_{\sigma \in S_r}$ and it becomes 0.
Therefore 
\begin{align*}
=& \sum_{\substack{i_1,\ldots,i_r=1\\
i_j \neq i_k}}^{N} \sum_{\sigma \in S_r}{\rm sgn}(\sigma) \overline{f_{i_1,1}}f_{i_1,\sigma(1)}\cdots \overline{f_{i_r,r}}f_{i_r,\sigma(r)}\\
=& \sum_{\substack{i_1,\ldots,i_r=1\\
i_j \neq i_k}}^{N} \sum_{\sigma \in S_r}{\rm sgn}(\sigma) f_{i_1,1}\overline{f_{i_1,\sigma(1)}}\cdots f_{i_r,r}\overline{f_{i_r,\sigma(r)}},
\end{align*}
because this sum is a real number.
On the other hand, we have
\begin{align*}
&\sum_{1\leq i_1<i_2< \cdots <i_{r}\leq N} \left|\det (s_{i_1} \, s_{i_2} \cdots s_{i_r} ) \right|^2\\
= & \sum_{i_1< \cdots <i_{r}} \left[\sum_{\sigma \in S_r} {\rm sgn} (\sigma) f_{i_1,\sigma(1)} \cdots f_{i_r,\sigma(r)} \right]\left[\sum_{\tau \in S_r} {\rm sgn} (\tau) \overline{f_{i_1,\tau(1)}} \cdots \overline{f_{i_r,\tau(r)}} \right]\\
= & \sum_{i_1< \cdots <i_{r}} \sum_{\sigma \in S_r} \sum_{\tau \in S_r} {\rm sgn}(\sigma \tau) f_{i_1,\sigma(1)}\overline{f_{i_1, \tau(1)}} \cdots f_{i_r,\sigma(r)}\overline{f_{i_r, \tau(r)}},
\end{align*}
We rearrange each term in this sum so that the sequence $\sigma(1),\ldots,\sigma(r)$ becomes $1,2,\ldots,r$. Then we obtain 
\begin{align*}
= & \sum_{i_1< \cdots <i_{r}} \sum_{\sigma \in S_r} \sum_{\tau \in S_r} {\rm sgn} (\sigma \tau) f_{i_{\sigma^{-1}(1)},1} \overline{f_{i_{\sigma^{-1}(1)},\tau(\sigma^{-1}(1))}}\cdots f_{i_{\sigma^{-1}(r)},r} \overline{f_{i_{\sigma^{-1}(r)},\tau(\sigma^{-1}(r))}}\\
= & \sum_{\substack{i_1,\ldots,i_r=1\\
i_j \neq i_k}}^{N} \sum_{\sigma \in S_r}{\rm sgn}(\tau \sigma^{-1}) f_{i_1,1}\overline{f_{i_1,\tau(\sigma^{-1}(1))}}\cdots f_{i_r,r}\overline{f_{i_r,\tau(\sigma^{-1}(r))}}.
\end{align*}
Here we use ${\rm sgn}(\sigma\tau) = {\rm sgn}(\tau \sigma^{-1})$.

Thus we have that
\[\det(h^{-1}) =  \sum_{1\leq j_1<j_2< \cdots <j_{r}\leq N} \left|\det (s_{j_1} \, s_{j_2} \cdots s_{j_r} ) \right|^2.\]

By changing basis, it is sufficient to prove in the case $s = (1,0,\cdots,0)$.
In this case, $|s|_h = h_{11}$ holds.
We have to calculate $h_{11}$, which can be written using (1,1)-cofactor of $h^{-1}$ and $\det (h^{-1})$.
We can calculate this cofactor similarly as above, because (1,1)-cofactor is the determinant of $(n-1) \times (n-1)$ submatrix.
This completes the proof.
\end{proof}

Now we continue the proof of Proposition \ref{coh}.
By the lemma, $|s|^2_h$ is locally integrable if and only if each term in the right hand side
$$\left|\det(s \, s_{i_1} \, s_{i_2} \cdots s_{i_{r-1}} )\right|^2  /  \sum \left|\det(s_{j_1} \, s_{j_2} \cdots s_{j_r} )\right|^2$$ is locally integrable for all $1 \leq i_1<i_2< \cdots < i_{r-1} \leq N$.
We define a multiplier ideal sheaf $\mathcal{J}$ by using a weight $\phi = \log \sum \left|\det
(s_{j_1} \, s_{j_2} \cdots s_{j_r} )
\right|^2$, i.e.\
\[\mathcal{J}(U) = \left\{f \in \mathcal{O}(U) ; \frac{|f|^2}{\sum_{1\leq j_1<j_2< \cdots <j_{r}\leq N} \left|\det
(s_{j_1} \, s_{j_2} \cdots s_{j_r} )
\right|^2} \in L^1_{loc}\right\}.\]
Then the condition $s \in E(h)$ is equivalent to the condition that 
$\det(s \, s_{i_1} \, s_{i_2} \cdots s_{i_{r-1}} ) \in \mathcal{J}$ for each $i_1, i_2, \ldots, i_{r-1}$.
We have that $\mathcal{J}$ is coherent, because multiplier ideals are coherent.
It follows that, for each $i_1, i_2, \ldots, i_{r-1}$, the sheaf of sections $s$ satisfying $\det(s \, s_{i_1} \, s_{i_2} \cdots s_{i_{r-1}} ) \in \mathcal{J}$ is coherent.
Since $E(h)$ is the intersection of all such sheaves, we have that $E(h)$ is coherent. Note that it is a finite intersection of coherent sheaves.

Now we consider the general case.
We denote by $h_{\rm Euc}$ the standard metric on $\mathbb{C}^N$.
Then locally we can write as
\[Ch_{\rm Euc} \leq h_0 \leq C' h_{\rm Euc}\]
for some $C,C'>0$.
Therefore, taking quotient, we have that
\[Ch_1 \leq h \leq C'h_1,\]
where $h_1$ denotes the quotient metric induced by $h_{\rm Euc}$. It follows that
\[C|s|_{h_1} \leq |s|_h \leq C' |s|_{h_1}\]
for any section $s\in E$.
This shows that $E(h_1)=E(h)$ and $E(h_1)$ is coherent by the preceding discussion.
\end{proof}

Here, for the next example, we will represent the curvature condition in Remark \ref{remNak} in the matrix form.
In the following, we regard $h$ as its representation matrix.
The Chern curvature of $h$ is written as $\Theta = \dbar(\overline{h}^{-1} \partial\overline{h}) = \sum \Theta_{ij} dz_i \wedge d\overline{z} _j$, where $\Theta_{ij}$ is a section of ${\rm End}(E)$. Then, the Hermitian form on $E \otimes T_X$ induced by $\Theta$ can be written as follows:
\[\Theta(s \otimes \xi, t \otimes \eta)=\{ \Theta s,t \}_h (\xi, \overline{\eta})= ({}^t s {}^t \Theta h t ) (\xi,\overline{\eta})\]
for $s,t \in E$ and $\xi,\eta \in T_X$.
If we take $e_i \otimes \partial/\partial z_j$ ($i =1,2,\ldots, r$, $j=1,2,\ldots, n$) for the frame of $E \otimes T_X$, the corresponding matrix representation of $\Theta(\cdot,\cdot)$ is as follows:
\[\Theta_{\rm Nak}=
\begin{pmatrix}
 {}^t\Theta_{1,1} h & {}^t\Theta_{1,2} h & \cdots & {}^t\Theta_{1,n} h \\
 {}^t\Theta_{2,1} h & {}^t\Theta_{2,2} h & \cdots & {}^t\Theta_{2,n} h \\
 \vdots & \vdots & \ddots & \vdots \\
 {}^t\Theta_{n,1} h & {}^t\Theta_{n,2} h & \cdots & {}^t\Theta_{n,n} h
\end{pmatrix}.
\]
Here, each ${}^t\Theta_{ij} h$ is an $r\times r$ matrix and then $\Theta_{\rm Nak}$ is an $nr \times nr$ matrix.
Then the condition that the curvature of $h$ is bounded from below in the sense of Nakano is equivalent to the condition that the following matrix is nonnegative:
\[\Theta_{\rm Nak} + C \cdot (\omega_{ij} h)_{i,j},\]
where we write $\omega = \sum \omega_{ij} idz_i \wedge d \overline{z}_j$.

\begin{example}\label{exampleapproximate}
Take $X=\mathbb{C}^2$. Let $E = X \times \mathbb{C}^2$ be a trivial rank two bundle. Let $(z,w)$ be the standard coordinate on $X$. We choose sections $s_1=(1,0), s_2=(z,w)$. Then the metric induced by $s_1, s_2$ on $E$ can be written as
\[h = 
\frac{1}{|w|^2}
\begin{pmatrix}
|w|^2 & -w\overline{z}\\
-z\overline{w} & |z|^2+1\\
\end{pmatrix}.
\]
Every entry of $h$ is smooth on $\mathbb{C}^2 \setminus \{w=0\}$.
The dual of $h$ is 
\[h_{\rm dual}= {}^t h^{-1} =
\begin{pmatrix}
 |z|^2+1 & z \overline{w}\\
 w \overline{z} & |w|^2\\
\end{pmatrix}.\]

We approximate $h$ in two ways, and calculate eigenvalues of $\Theta_{\rm Nak} + C \cdot (\omega_{ij} h)_{i,j}$. As the consequence, we will see both the approximations do not have bounded curvature below in the sense of Nakano. In particular, we will show that {\it the Nakano eigenvalue of the approximation of $h$ obtained by convolution is not bounded below}.

We consider the following two approximations of $h_{\rm dual}$:
\begin{align*}
h_{{\rm dual},\varepsilon}= &
\begin{pmatrix}
  |z|^2 + 1 & z \overline{w} \\
  w \overline{z} & |w|^2 +\varepsilon
\end{pmatrix}
= h_{\rm dual} + \varepsilon
\begin{pmatrix}
  0 &  0\\
  0 &  1\\
\end{pmatrix}
\\
h'_{{\rm dual},\varepsilon} = &
\begin{pmatrix}
  |z|^2 + 1 + \varepsilon & z \overline{w} \\
  w \overline{z} & |w|^2 +\varepsilon
\end{pmatrix}
= h_{\rm dual} + \varepsilon
\begin{pmatrix}
  1 &  0\\
  0 &  1\\
\end{pmatrix}
\end{align*}

Let $h_\varepsilon$ and $h'_\varepsilon$ be the dual metrics of $h_{{\rm dual},\varepsilon}$ and $h'_{{\rm dual},\varepsilon}$, respectively.

Note that $h'_{{\rm dual},\varepsilon}$ is obtained by convolution of $h_{\rm dual}$ by an appropriate smooth kernel function. 
Indeed, let $\chi$ be a smooth function with compact support on $\mathbb{C}^2$ such that $\chi$ depends only on $|(z,w)|=|z|^2+|w|^2$ and $\int_{\mathbb{C}^2} \chi d\lambda = 1$. 
Let $f(z,w):= |z|^2$.
We will show that $\chi * f = f + \varepsilon_\chi$, where $\varepsilon_\chi>0$ is a constant.
Let $x,p \in \mathbb{C}^2$.
Then we have
\begin{align*}
\chi * f (x) &= \int_{\mathbb{C}^2} \chi(p) f(x-p) d\lambda(p)\\
&= \frac{1}{2}\int_{\mathbb{C}^2}\chi(p) f(x-p) d\lambda(p) + \frac{1}{2}\int_{\mathbb{C}^2}\chi(p) f(x-p) d\lambda(p)\\
&= \frac{1}{2}\int_{\mathbb{C}^2}\chi(p) f(x-p) d\lambda(p) + \frac{1}{2}\int_{\mathbb{C}^2}\chi(-p) f(x+p) d\lambda(p)\\
&= \int_{\mathbb{C}^2}\chi(p) \frac{1}{2}(f(x-p)+f(x+p) d\lambda(p),
\end{align*}
because $\chi$ is symmetric under $p\mapsto -p$.
We have that $f(x-p)+f(x+p) = 2|x_1|^2 + 2|p_1|^2$, where $x=(x_1,x_2),p=(p_1,p_2)$.
Thus,
\begin{align*}
\chi*f(x) &= \int_{\mathbb{C}^2}\chi(p) (|x_1|^2+|p_1|^2) d\lambda(p)\\
&= |x_1|^2 + \int_{\mathbb{C}^2}\chi(p)|p_1|^2 d\lambda(p)\\
&= f(x) + \varepsilon_\chi.
\end{align*}
Similarly we have $\chi * |w|^2 = |w|^2 + \varepsilon_\chi$ with the same constant $\varepsilon_\chi$ (because $\chi$ is symmetric under $(z,w)\mapsto(w,z)$).

For a function $g = z\overline{w}$, we have $\chi * g = g$ by similar calculation using an equation $g(z-p_1,w-p_2)+g(z+p_1,w-p_2)+g(z-p_1,w+p_2)+g(z+p_1,w+p_2) = 4g(z,w)$.

We denote by $\Theta_{\rm Nak , \varepsilon}$ and $\Theta'_{\rm Nak , \varepsilon}$ the corresponding matrix representation of the Hermitian form on $E \otimes T_X $ induced by $h_\varepsilon$ and  $h'_\varepsilon$, respectively. By calculating this matrix, we have that
$\Theta_{\rm Nak, \varepsilon} =  -\displaystyle\frac{\varepsilon}{(\varepsilon|z|^2+|w|^2+\varepsilon)^3} M$, where $M$ is a matrix
$$
\begin{pmatrix}-{\left( |w|^2+\varepsilon\right) }^{2} & w\,\left( |w|^2+\varepsilon\right) \,\overline{z} & w\,\left( |w|^2+\varepsilon\right) \,\overline{z} & -{w}^{2}\,{\overline{z}}^{2}\cr \overline{w}\,\left( |w|^2+\varepsilon\right) \,z & -|w|^2\,|z|^2 & -\left( |w|^2+\varepsilon\right) \,\left( |z|^2+1\right)  & w\,\overline{z}\,\left( |z|^2+1\right) \cr \overline{w}\,\left( |w|^2+\varepsilon\right) \,z & -\left( |w|^2+\varepsilon\right) \,\left( |z|^2+1\right)  & -|w|^2\,|z|^2 & w\,\overline{z}\,\left( |z|^2+1\right) \cr -{\overline{w}}^{2}\,{z}^{2} & \overline{w}\,z\,\left( |z|^2+1\right)  & \overline{w}\,z\,\left( |z|^2+1\right)  & -{\left( |z|^2+1\right) }^{2}\end{pmatrix},
$$
and 
$\Theta'_{\rm Nak, \varepsilon} =-\displaystyle\frac{\varepsilon(\varepsilon+1)}{(\varepsilon|z|^2+\varepsilon|w|^2+|w|^2+\varepsilon^2+\varepsilon)^3} M'$, where $M'$ is a matrix $$
\begin{pmatrix}-{\left( |w|^2+\varepsilon\right) }^{2} & w\,\left( |w|^2+\varepsilon\right) \,\overline{z} & w\,\left( |w|^2+\varepsilon\right) \,\overline{z} & -{w}^{2}\,{\overline{z}}^{2}\cr \overline{w}\,\left( |w|^2+\varepsilon\right) \,z & -|w|^2\,|z|^2 & -\left( |w|^2+\varepsilon\right) \,\left( |z|^2+\varepsilon+1\right)  & w\,\overline{z}\,\left( |z|^2+\varepsilon+1\right) \cr \overline{w}\,\left( |w|^2+\varepsilon\right) \,z & -\left( |w|^2+\varepsilon\right) \,\left( |z|^2+\varepsilon+1\right)  & -|w|^2\,|z|^2 & w\,\overline{z}\,\left( |z|^2+\varepsilon+1\right) \cr -{\overline{w}}^{2}\,{z}^{2} & \overline{w}\,z\,\left( |z|^2+\varepsilon+1\right)  & \overline{w}\,z\,\left( |z|^2+\varepsilon+1\right)  & -{\left( |z|^2+\varepsilon+1\right) }^{2}\end{pmatrix}.
$$

We claim that, for every constant $C>0$, there is $\varepsilon > 0$ which does not satisfy $\Theta_{\rm Nak,\varepsilon} \geq -C \omega \otimes {\rm Id}_E$ .
We take $\omega = idz \wedge d\overline{z} + idw \wedge d\overline{w}$. Then the matrix representations of $\omega \otimes {\rm Id}_E$ as a Hermitian form on $E \otimes T_X$ are
\[
\begin{pmatrix}
h_\varepsilon & 0\\
0 & h_\varepsilon\\
\end{pmatrix}
, {\rm or}
\begin{pmatrix}
h'_\varepsilon & 0\\
0 & h'_\varepsilon\\
\end{pmatrix}
{\rm , respectively}.
\]
We will show that for every fixed $C>0$, $\Theta_{\rm Nak,\varepsilon} + C \omega \otimes {\rm Id}_E$ (resp.\ $\Theta'_{\rm Nak,\varepsilon} + C \omega \otimes {\rm Id}_E$) has a negative eigenvalue for sufficiently small $\varepsilon$.
By direct computation, one of the eigenvalues of $\Theta_{{\rm Nak},\varepsilon} + C \omega \otimes {\rm Id}_E$ (resp.\ $\Theta'_{\rm Nak,\varepsilon} + C \omega \otimes {\rm Id}_E$) at $(z,w)=(0,0)$ is as follows:
\begin{align*}
\frac{(\varepsilon+1)C- \sqrt[]{(1-\varepsilon)^2 C^2 + 4}}{2\varepsilon} &\,\,\,\text{ for}\,\, \Theta_{{\rm Nak}, \varepsilon},\\
\frac{(2\varepsilon+1)C - \sqrt[]{C^2+4}}{2\varepsilon^2 + 2 \varepsilon} &\,\,\,\text{ for}\,\, \Theta'_{{\rm Nak}, \varepsilon}.
\end{align*}
For a fixed $C>0$, these eigenvalues go to $-\infty$ as $\varepsilon \to 0$.
Therefore, the Chern curvature of the approximations of $h_{\varepsilon}, h'_{\varepsilon}$ are not bounded below in the sense of Nakano.
\end{example}

\section{Existence of negatively curved singular Hermitian metrics on bundles given by extensions }\label{ClassifyGrifNegSingMet}
We consider an extension of vector bundles
$0 \to L \to E \to E' \to 0$ 
with a line bundle $L$.
We will show that the existence of a negatively curved singular Hermitian metric on $E$ with some conditions on $L$ implies splitting of this sequence. Using this, we can determine all negatively curved singular Hermitian metrics on certain vector bundles.

\begin{theorem}\label{determine}
Let $X$ be a compact complex manifold, $L$ a holomorphic line bundle on $X$, and let $E, E'$ be holomorphic vector bundles on $X$.
Assume that there is an exact sequence
\[0 \longrightarrow L \overset{i}\longrightarrow E \overset{p}\longrightarrow E' \longrightarrow 0. \]
Suppose that there are a holomorphic section $f \in H^0(X,L^*)$ and a negatively curved singular Hermitian metric $h$ on $E$ with $|i(s)|^2_h = | (f,s) |^2$ for each $s \in L$, where $( \cdot, \cdot )$ is a natural pairing on $L_x^* \times L_x$.
Then, the exact sequence above splits.
\end{theorem}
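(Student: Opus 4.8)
The plan is to reduce the statement to the vanishing of the extension class of the given sequence and to manufacture the trivializing $0$-cochain from the metric, exactly as in the smooth model described in the introduction. The sequence $0\to L\to E\to E'\to 0$ determines a class $e\in\mathrm{Ext}^1(E',L)\cong H^1(X,\mathcal{H}om(E',L))$, and it splits precisely when $e=0$. First I would fix a sufficiently fine open cover $\{U_\alpha\}$ on which $L,E,E'$ are trivial, a local holomorphic frame $\ell_\alpha$ of $L$, and a local holomorphic splitting $j_\alpha\colon E'|_{U_\alpha}\to E|_{U_\alpha}$ of $p$ (so $p\circ j_\alpha=\mathrm{id}$). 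Since $p\circ(j_\alpha-j_\beta)=0$, there are holomorphic sections $c_{\alpha\beta}\in H^0(U_{\alpha\beta},\mathcal{H}om(E',L))$ with $j_\alpha-j_\beta=i\circ c_{\alpha\beta}$, and $(c_{\alpha\beta})$ is a \v{C}ech cocycle representing $e$.

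Next I would read off a candidate cochain from $h$. Polarizing the hypothesis $|i(s)|^2_h=|(f,s)|^2$ gives $\langle i(s),i(s')\rangle_h=(f,s)\overline{(f,s')}$, and the Cauchy--Schwarz inequality for the nonnegative Hermitian form $h$ gives $\langle u,i(s)\rangle_h=0$ whenever $(f,s)=0$. Hence on $U_\alpha\setminus\{f=0\}$, where $h|_{i(L)}$ is nondegenerate, the $h$-orthogonal projection of $j_\alpha$ onto the line subbundle $i(L)$ is well defined and equals $i\circ\gamma_\alpha$ for a section $\gamma_\alpha\in\mathcal{H}om(E',L)$, concretely $\gamma_\alpha(t)=\bigl(\langle j_\alpha(t),i(\ell_\alpha)\rangle_h/|(f,\ell_\alpha)|^2\bigr)\ell_\alpha$; this is the higher-rank analogue of the $\gamma_\alpha$ of the introductory sketch. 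Because $j_\alpha-j_\beta=i\circ c_{\alpha\beta}$ already takes values in $i(L)$, projecting yields $\gamma_\alpha-\gamma_\beta=c_{\alpha\beta}$ on $U_{\alpha\beta}\setminus\{f=0\}$. Thus, \emph{if} each $\gamma_\alpha$ is holomorphic on all of $U_\alpha$, then $(c_{\alpha\beta})$ is a coboundary and $e=0$.

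The heart of the matter, and the step I expect to be the main obstacle, is that \emph{negativity of $h$ forces $\gamma_\alpha$ to be holomorphic}; this is the general form of the introductory computation and is the content of Proposition \ref{locrep}. I would reduce to a rank-two picture: for a fixed local holomorphic section $t$ of $E'$, the holomorphic map $U_\alpha\times\mathbb{C}^2\to E$ sending $(x,(\lambda_0,\lambda_1))$ to $\lambda_0\,j_\alpha(t)(x)+\lambda_1\,i(\ell_\alpha)(x)$ pulls $h$ back, by Lemma \ref{PullbackofGrifNeg}(1), to a negatively curved singular Hermitian metric on the trivial rank-two bundle, in which the subbundle spanned by $i(\ell_\alpha)$ is \emph{flat}, its norm being $|(f,\ell_\alpha)|$, the modulus of a holomorphic function. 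Writing the entries as $A=|j_\alpha(t)|_h^2$, $B=\langle j_\alpha(t),i(\ell_\alpha)\rangle_h$, $C=|(f,\ell_\alpha)|^2$, negativity (Definition \ref{Grifpos} with Lemma \ref{CharGrifNeg}) says $A+2\,\mathrm{Re}(\bar\lambda B)+|\lambda|^2 C$ is plurisubharmonic in $(x,\lambda)$; since $\log C$ is pluriharmonic, the top-order term of the resulting Levi-form determinant inequality vanishes, and the remaining inequality, which is affine in $\lambda$, forces the $\bar\partial$-derivative of the projection quantity to vanish. Making this distributional argument rigorous (products of currents, and genuine regularity of $\gamma_\alpha$ rather than of $B$ alone) is the technical core.

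Finally I would globalize. The function $|j_\alpha(t)|_h^2$ is plurisubharmonic, hence locally bounded above, which via Cauchy--Schwarz controls $\langle j_\alpha(t),i(\ell_\alpha)\rangle_h$ near $\{f=0\}$; using $f\not\equiv 0$ I would then extend the holomorphic $\gamma_\alpha$ across the analytic set $\{f=0\}$ by a removable-singularity argument. This passage across the degeneracy locus is the delicate secondary point, since $h|_{i(L)}$ degenerates precisely there; when $f$ is nowhere vanishing, as in the application to the elliptic-curve example (where $L\cong\mathcal{O}_C$ and $f$ is a nonzero constant), it is immediate and one may instead divide the globally holomorphic datum by $f$ directly. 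Once holomorphic $\gamma_\alpha$ with $\gamma_\alpha-\gamma_\beta=c_{\alpha\beta}$ are in hand on all overlaps, the class $e$ vanishes and the sequence splits. The two places demanding genuine work are the holomorphicity of Proposition \ref{locrep} and the behaviour across $\{f=0\}$; the remainder is bookkeeping with extension classes.
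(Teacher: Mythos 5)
Your proposal follows essentially the same route as the paper: reduce to vanishing of the extension class, build a trivializing $0$-cochain out of the $h$-orthogonal projection onto $i(L)$ (the paper's $\gamma_{\alpha,i}=\overline{B_{\alpha,i}/f_\alpha}$ tensored with $f^{-1}$), obtain its holomorphicity from Proposition \ref{locrep}, and handle $\{f=0\}$ by a Cauchy--Schwarz bound plus Riemann extension, which is exactly where the paper places that step (inside the proof of Proposition \ref{locrep}, yielding $B=f\overline{g}$ with $g$ holomorphic across the zero set). The only divergence is your heuristic for \emph{why} negativity forces holomorphicity (a Levi-form determinant argument), whereas the paper tests $h$ against the sections $(u,1)$ and then $(u,g_1)$ and applies Lemma \ref{pshlim}; since you invoke Proposition \ref{locrep} for that step, the argument is the paper's.
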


We begin the proof of Theorem \ref{determine} with local consideration.

\begin{proposition}\label{locrep}
Let $U$ be a (small) ball in $\mathbb{C}^n$ and let $h$ be a singular Hermitian metric on $U \times \mathbb{C}^r$.
We assume the representation matrix of $h$ has the form
\[h = 
\begin{pmatrix}
      |f|^2 & B \\
      \overline{B} & C  \\
\end{pmatrix},
\]
where $f$ is a holomorphic function and $B, C$ are measurable function valued $1 \times (r-1)$ and $(r-1) \times (r-1)$ matrix on $U$ ($r \geq 2$).
Assume that $h$ is negatively curved.
Then, there are holomorphic functions $g_i \in \mathcal{O}_U$ with $B_i=f \overline{g_i}$.
\end{proposition}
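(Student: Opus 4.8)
The plan is to strip the statement down to its essential case and then exploit Griffiths seminegativity against holomorphic sections with \emph{non-constant} coefficients, which is the point that makes everything work. First I would reduce to $r=2$: for a fixed index $i$, testing the metric only on the subframe $(e_1,e_{i+1})$, i.e.\ on holomorphic sections $e_1+a\,e_{i+1}$, involves only the $2\times 2$ principal submatrix $\left(\begin{smallmatrix} |f|^2 & B_i\\ \overline{B_i} & C_{ii}\end{smallmatrix}\right)$, so one off-diagonal entry can be handled at a time; rename $B=B_i$, $C=C_{ii}$. Cauchy--Schwarz for the nonnegative form $h$ gives $|B|^2\le |f|^2C$, and since $C=|e_{i+1}|_h^2$ is plurisubharmonic it lies in $L^p_{loc}$ for all $p$; hence $B\in L^1_{loc}$ and every current below is defined. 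Next I would reduce to $f\equiv 1$: the claim is local, so on a small ball $V\subset\Omega:=\{f\ne0\}$ I write $f=e^{\rho}$ with $\rho$ holomorphic and replace $e_1$ by $\tilde e_1:=e^{-\rho}e_1$. This holomorphic frame change makes $\langle\tilde e_1,\tilde e_1\rangle_h=1$ and $\tilde B:=\langle\tilde e_1,e_2\rangle_h=e^{-\rho}B$, and seminegativity is frame-independent; so it suffices to prove that \emph{if $|e_1|_h^2\equiv1$ then $B$ is antiholomorphic}. Granting this, $g:=\overline{B}/\overline{f}=\overline{\tilde B}$ is holomorphic on each $V$, hence on all of $\Omega$, with $B=f\overline{g}$ there.

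\textbf{Core case $f\equiv1$.} Now $|e_1+ta\,e_2|_h^2=1+2\operatorname{Re}(\overline{ta}\,B)+|ta|^2C$ is plurisubharmonic for every holomorphic $a$ and every constant $t\in\mathbb{C}$. Pairing $i\partial\dbar$ of this function against an arbitrary strongly positive test form $\phi$ and discarding the constant, I obtain $2\operatorname{Re}(\overline{t}\,I_1)+|t|^2 I_2\ge0$ for all $t$, where $I_1=\int \overline{a}B\,i\partial\dbar\phi$ and $I_2=\int|a|^2C\,i\partial\dbar\phi$. A quadratic in $t$ nonnegative for all $t$ forces $I_1=0$; since nonnegative $(n-1,n-1)$-forms span, this says $i\partial\dbar(\overline{a}B)=0$ as a current, for \emph{every} holomorphic $a$. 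I want to stress that constant $a$ alone yields only $i\partial\dbar B=0$, i.e.\ mere pluriharmonicity of $B$, which is genuinely insufficient: one checks directly that $h=\left(\begin{smallmatrix}1&z\\ \bar z&|z|^2\end{smallmatrix}\right)$ has pluriharmonic off-diagonal entry $z$ yet fails to be seminegative. The non-constant coefficients $a(z)$ are therefore essential.

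\textbf{Bootstrap.} Taking $a\equiv1$ gives $i\partial\dbar B=0$, so $B$ is a distributional solution of the (in particular Laplace) equation and is smooth by Weyl's lemma. Feeding this back, $i\partial\dbar(\overline{a}B)=\overline{a}\,i\partial\dbar B+i\,\partial B\wedge\overline{\partial a}=i\,\partial B\wedge\overline{\partial a}=0$; choosing $a=z_k$ gives $i\,\partial B\wedge d\overline{z}_k=0$, and independence of the forms $dz_j\wedge d\overline{z}_k$ forces $\partial_{z_j}B=0$ for all $j$. Thus $B$ is antiholomorphic, closing the core case and producing, after the reductions, a holomorphic $g=\overline{B}/\overline{f}$ on $\Omega$ with $B=f\overline{g}$.

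\textbf{Removing $\{f=0\}$ and the main obstacle.} Finally I would extend $g$ across $Z=\{f=0\}$: on $\Omega$ one has $|g|=|B|/|f|\le \sqrt{C}\in L^2_{loc}$, so $g$ is holomorphic off the analytic hypersurface $Z$ and locally $L^2$, hence extends across $Z$ by the removable-singularity theorem for $L^2$ holomorphic functions; as $Z$ is null, $B=f\overline{g}$ persists almost everywhere (and $f\equiv0$ forces $B\equiv0$, trivially). I expect the main obstacle to be the rigor of the core step — justifying that the scaled inequality really yields the distributional identity $i\partial\dbar(\overline{a}B)=0$, and then the regularity bootstrap from a pluriharmonic current to a genuinely antiholomorphic function — together with the bookkeeping in passing from the local $f\equiv1$ normalization back to the given $f$ and across its zero locus.
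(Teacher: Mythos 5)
Your proof is correct and follows essentially the same route as the paper's: reduce to $r=2$ and (locally, off the zero set) to $f\equiv 1$, isolate the cross term $\operatorname{Re}(\overline{ta}\,B)$ by a quadratic-in-$t$ positivity argument, use holomorphic sections with \emph{non-constant} second component to kill the holomorphic part of $B$, and extend across $\{f=0\}$ via the Cauchy--Schwarz bound $|B/f|^2\le C$ and a removable-singularity theorem. The only cosmetic differences are that the paper isolates the cross term with Lemma~\ref{pshlim} (a limit of positive currents as $t\to 0$) and then tests with $a=g_1$, the holomorphic part of the pluriharmonic function $B=g_1+\overline{g_2}$, to conclude that $|g_1|^2$ is pluriharmonic and hence $g_1$ constant, whereas you test with $a=z_k$ and apply the Leibniz rule after a Weyl-lemma regularity step; both implementations are sound.
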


To prove Proposition \ref{locrep}, we use the following lemma.

\begin{lemma}\label{pshlim}
Let $\phi,\psi$ be locally integrable functions on $U \subset \mathbb{C}^n$.
Assume that $t\phi + \psi$ is plurisubharmonic for every $t>0$.
Then, $\psi$ equals to a plurisubharmonic function almost everywhere.
\end{lemma}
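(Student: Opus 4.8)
The plan is to pass to the level of currents and exploit that positivity of a $(1,1)$-current is tested against a \emph{fixed}, $t$-independent family of nonnegative test objects, against which the relevant pairing depends affinely on $t$. The only nontrivial ingredient I will invoke is the standard regularity fact that an $L^1_{loc}$ function $u$ agrees almost everywhere with a plurisubharmonic function if and only if the current $i\ddbar u \geq 0$ (see e.g.\ \cite{DemCG}); this is precisely what lets me move between the hypothesis ``plurisubharmonic'' and a distributional inequality. Since $\phi,\psi \in L^1_{loc}(U)$, the currents $i\ddbar\phi$ and $i\ddbar\psi$ are well defined, and by linearity $i\ddbar(t\phi+\psi) = t\,i\ddbar\phi + i\ddbar\psi$ for every $t>0$. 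Thus the hypothesis that each $t\phi+\psi$ is plurisubharmonic reads
\[ t\,i\ddbar\phi + i\ddbar\psi \geq 0 \quad\text{for every } t>0. \]

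Next I would fix the test data. For $\xi \in \mathbb{C}^n$ and nonnegative $\chi \in C^\infty_c(U)$, write $\langle i\ddbar u\rangle_{\chi,\xi}$ for the pairing of the current against $\chi$ in the direction $\xi$, so that positivity of $i\ddbar u$ means $\langle i\ddbar u\rangle_{\chi,\xi}\geq 0$ for all such $\chi,\xi$. Applying this to the current above gives
\[ t\,\langle i\ddbar\phi\rangle_{\chi,\xi} + \langle i\ddbar\psi\rangle_{\chi,\xi} \geq 0 \quad\text{for every } t>0. \]
Both coefficients are finite real numbers, since integrating by parts moves the derivatives onto $\chi$ and $\phi,\psi$ are locally integrable. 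Hence the left-hand side is an affine function of $t$ that is nonnegative on all of $(0,\infty)$, and letting $t\to 0^+$ yields $\langle i\ddbar\psi\rangle_{\chi,\xi}\geq 0$. As $\chi$ and $\xi$ were arbitrary, $i\ddbar\psi \geq 0$ as a current, and the regularity fact recalled above then shows that $\psi$ coincides almost everywhere with a plurisubharmonic function, as desired.

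I expect the point needing care to be conceptual rather than computational: one must test positivity against a single family of nonnegative objects that does not vary with $t$, so that the inequality is genuinely affine in $t$ and the passage $t\to 0^+$ is legitimate. If one prefers to avoid distributional language, the same argument runs after regularizing by convolution, setting $\phi_\varepsilon = \phi * \rho_\varepsilon$ and $\psi_\varepsilon = \psi * \rho_\varepsilon$ with a radial mollifier $\rho_\varepsilon$: then $t\phi_\varepsilon + \psi_\varepsilon$ is smooth and plurisubharmonic, so letting $t\to 0^+$ pointwise gives $i\ddbar\psi_\varepsilon \geq 0$, and $\psi_\varepsilon \to \psi$ in $L^1_{loc}$ recovers $i\ddbar\psi \geq 0$ upon taking the distributional limit.
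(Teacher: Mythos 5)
Your proposal is correct and follows essentially the same route as the paper: pass to $i\ddbar$ at the level of currents, let $t\to 0^+$ using that positivity is preserved in the (weak) limit, and conclude via the standard regularity fact for $L^1_{loc}$ functions with $i\ddbar u\geq 0$. Your version merely makes explicit the affine dependence on $t$ of the pairing against fixed test data, which the paper leaves implicit in the phrase ``converges as currents.''
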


\begin{proof}
When $t \to 0$, $t\phi + \psi$ converges to $\psi$ as currents, so we have $i\ddbar (t\phi + \psi) \rightarrow i\ddbar\psi$ as currents. By assumption $i\ddbar (t\phi + \psi)$ is a positive current for any $t>0$. Therefore the limit $i\ddbar \psi$ is also positive, so we have $\psi$ equals to a plurisubharmonic function almost everywhere.
\end{proof}

\begin{proof}[\indent\sc Proof of Proposition \ref{locrep}]
We can assume $r=2$ and we denote $B_1 = B, C_{1,1} = C$. 
First, we assume $f$ is a constant function $f \equiv 1$ and prove $B = \overline{g}$ for some holomorphic function $g$.

Let $u\in \mathbb{C}$ be a constant. Then,
\[
\begin{pmatrix}
      u & 1  \\
\end{pmatrix}
\begin{pmatrix}
      1 & B \\
      \overline{B} & C  \\
\end{pmatrix}
\begin{pmatrix}
      \overline{u}\\
      1\\
\end{pmatrix}
= |u|^2 + \overline{u}\overline{B} + uB + C
\]
is plurisubharmonic. Since $|u|^2$ is constant, $\overline{u}\overline{B} + uB + C$ is also plurisubharmonic.
Taking $u \in \mathbb{R}$ and $u \in i\mathbb{R}$, we have that $2t\,{\rm Re}\, B + C$ and $2t\,{\rm Im}\,B + C$ are plurisubharmonic for every $t \in \mathbb{R}$.
Then, $2{\rm Re}B + \frac{1}{t} C$ and $2{\rm Im}B + \frac{1}{t} C$ are plurisubharmonic for $t \neq 0$ and Lemma \ref{pshlim} shows that $B$ is a (complex-valued) pluriharmonic function.
It follows that $B$ can locally be written as a sum of a holomorphic function and an antiholomorphic function, namely $B = g_1 + \overline{g_2}$.

Since $g_1$ is holomorphic, we have that for any $u\in \mathbb{C}$
\[
\begin{pmatrix}
      u & g_1 \\
\end{pmatrix}
\begin{pmatrix}
      1 & g_1 + \overline{g_2} \\
      \overline{g_1} + g_2 & C  \\
\end{pmatrix}
\begin{pmatrix}
      \overline{u}\\
      \overline{g_1}\\
\end{pmatrix}
= |u|^2 + \overline{u}|g_1|^2+\overline{u}g_1g_2+ u|g_1|^2+u\overline{g_1}\overline{g_2} + |g_1|^2 C
\]
is plurisubharmonic. The term $|u|^2$ is constant, and $\overline{u}g_1g_2, u\overline{g_1}\overline{g_2}$ are pluriharmonic since they are holomorphic and antiholomorphic respectively. Thus we have $\overline{u}|g_1|^2 +  u|g_1|^2 + |g_1|^2 C$ is plurisubharmonic for every $u \in \mathbb{C}$. Lemma \ref{pshlim} shows that $|g_1|^2$ is pluriharmonic. Therefore we have
\[i\ddbar |g_1|^2 = i\partial g_1 \wedge \dbar \overline{g_1} =0,\]
which implies that $\partial g_1=0$. Thus $g_1$ is constant and $B=g_1+\overline{g_2}$ is antiholomorphic, hence the proposition holds when $f=1$.

For the general case, we can similarly show that $B/f$ is antiholomorphic on $\{f \neq 0\}$ using sections $(u/f,1),(u/f,g_1)$ instead of $(u,1),(u,g_1)$.
Since $h$ is nonnegative, we have $\det h = |f|^2 C - |B|^2 \geq 0$,
thus $|B/f|^2 \leq C$.
Since $C=|(0,1)|^2_h$ is plurisubharmonic, $C$ is locally bounded from above. Therefore, $|B/f|$ is locally bounded. Riemann's extension theorem implies that $B/f$ is an antiholomorphic function on $U$.
\end{proof}

\begin{proof}[\indent\sc Proof of Theorem \ref{determine}]
Let ${\rm rank\,} E = r$.
Let $\{U_\alpha\}$ be a covering of $X$ by sufficiently small open sets.
We assume that given bundles are trivial on each $U_\alpha$.
We denote a trivializing section of $L$ on $U_\alpha$ by $s_\alpha$ and a holomorphic frame of $E'$ on $U_\alpha$ by $e'_{\alpha,2}, e'_{\alpha,3}, \ldots, e'_{\alpha,r}$.
We take a holomorphic frame $e_{\alpha,1}, e_{\alpha,2}, \ldots, e_{\alpha,r}$ of $E$ satisfying
\begin{align*}
i(s_\alpha) &= e_{\alpha,1},\\
p(e_{\alpha, i}) &= e'_{\alpha, i}. \ \  (i=2,\ldots,r)
\end{align*}
The transition function of $L$ and $E'$ are denoted by $g_{\alpha\beta}$ and $g'_{\alpha\beta,i,j}$ as follows:
\begin{align*}
s_{\alpha} &= g_{\alpha\beta}s_{\beta},\\
e'_{\alpha,i} &= \sum_{2\leq j \leq r} g'_{\alpha\beta,i,j} e'_{\beta,j}. \ \  (i=2,\ldots,r)
\end{align*}
Then we have that $p(e_{\alpha, i} - \sum_j g'_{\alpha\beta,i,j} e_{\beta,j})=0$, thus there exist holomorphic functions $h_{\alpha\beta,i}$ on $U_{\alpha} \cap U_{\beta}$, $i=2,\ldots,r$, satisfying
\[e_{\alpha, i} - \sum_{2\leq j \leq r} g'_{\alpha\beta,i,j} e_{\beta,j} = h_{\alpha\beta,i} e_{\beta,1}.\]
In this notation, the transition function of $E$ can be written as
\begin{align*}
e_{\alpha,1} &= g_{\alpha\beta}e_{\beta,1},\\
e_{\alpha,i} &= h_{\alpha\beta,i} e_{\beta,1} + \sum_{2\leq j \leq r} g'_{\alpha\beta,i,j}e_{\beta,j}. \ \ (i=2,\ldots,r)
\end{align*}
Let $s$ be a section of $E$.
When we write $s = a_1 e_{\alpha,1}+ \cdots + a_r e_{\alpha,r} = b_1 e_{\beta,1}+ \cdots + b_r e_{\beta,r}$, the transition function is as follows:
\[
\begin{pmatrix}
b_1\\
b_2\\
\vdots\\
b_r\\
\end{pmatrix}
=
\begin{pmatrix}
g_{\alpha\beta} & h_{\alpha\beta,2} & \cdots & h_{\alpha\beta,r}\\
0            & g'_{\alpha\beta,2,2} & \cdots & g'_{\alpha\beta,r,2}\\
\vdots          & \vdots            & \ddots & \vdots \\
0            & g'_{\alpha\beta,2,r} & \cdots & g'_{\alpha\beta,r,r}
\end{pmatrix}
\begin{pmatrix}
a_1\\
a_2\\
\vdots\\
a_r\\
\end{pmatrix}.
\]
We will denote this matrix by $G_{\alpha\beta}$.

We denote the local matrix representation of $h$ by
\[h_\alpha = 
\begin{pmatrix}
A_\alpha & B_\alpha\\
{}^t\overline{B}_\alpha & C_\alpha
\end{pmatrix},
\]
where $A_\alpha$ is a scalar, $B_\alpha = (B_{\alpha,2},\ldots,B_{\alpha,r})$ is a $1\times (r-1)$ matrix, and $C_\alpha$ is a $(r-1)\times(r-1)$ matrix.
We write the given section $f \in H^0(X,L^*)$ as $f = f_\alpha s_\alpha^{-1}$, where $f_\alpha$ is a holomorphic function on $U_\alpha$ and $s_\alpha^{-1}$ is a dual of $s_\alpha$.
By $s_\alpha = g_{\alpha\beta}s_\beta$, we have $f_\alpha = g_{\alpha\beta}f_\beta$.
By assumption $|s|^2_h = |(f,s)|^2_h$ for $s \in L$, we have $A_\alpha = |e_{\alpha,1}|^2_h = |(f,e_{\alpha,1})|^2 = |f_\alpha|^2$.
The transition function for $h_\alpha$ is as follows:
\[h_\alpha = {}^t G_{\alpha\beta} h_\beta \overline{G_{\alpha\beta}}.\]
By calculating the first row, we have that
\begin{align*}
B_{\alpha,i} &= g_{\alpha\beta}A_{\beta}\overline{h_{\alpha\beta,i}}+\sum_{2 \leq j \leq r} g_{\alpha\beta}B_{\beta,j}\overline{g'_{\alpha\beta,i,j}}\\
 &= |f_\beta|^2 g_{\alpha\beta}\overline{h_{\alpha\beta,i}}+\sum_{2 \leq j \leq r} g_{\alpha\beta}B_{\beta,j}\overline{g'_{\alpha\beta,i,j}}.
\end{align*}
By dividing both sides by $f_\alpha = g_{\alpha\beta}f_\beta$, we have
\begin{align*}
B_{\alpha,i}/f_{\alpha} &= \frac{|f_\beta|^2 g_{\alpha\beta}\overline{h_{\alpha\beta,i}}}{g_\alpha\beta f_\beta}+ \frac{\sum_{2 \leq j \leq r} g_{\alpha\beta}B_{\beta,j}\overline{g'_{\alpha\beta,i,j}}}{g_{\alpha\beta}f_{\beta}}\\
&= \overline{f_\beta h_{\alpha\beta,i}} + \sum_j \overline{g'_{\alpha\beta,i,j}} B_{\beta,j}/f_{\beta}.
\end{align*}
Let $\gamma_{\alpha,i} = \overline{B_{\alpha,i}/f_{\alpha}}$, then $\gamma_{\alpha,i}$ satisfies
\[\gamma_{\alpha,i} = f_\beta h_{\alpha\beta,i} + \sum_j g'_{\alpha\beta,i,j} \gamma_{\beta,j}.\]
By the Proposition \ref{locrep}, $\gamma_{\alpha,i}$ is a holomorphic function on $U_\alpha$.

Let $\xi'_{\alpha,2},\ldots,\xi'_{\alpha,r}$ be the dual frame in $(E')^*$ of $e'_{\alpha,2},\ldots,e'_{\alpha,r}$.
We consider a \v{C}ech 0-cochain $\left(\sum_{2 \leq i \leq r} \gamma_{\alpha,i} \xi'_{\alpha,i} \otimes f^{-1}, U_\alpha\right).$
Then, the differential of this cochain is
\[\sum_{2 \leq i \leq r} \gamma_{\alpha,i} \xi'_{\alpha,i} \otimes f^{-1} - \sum_i \gamma_{\beta,i} \xi'_{\beta,i} \otimes f^{-1} = \sum_i \frac{h_{\alpha\beta,i}}{g_{\alpha\beta}} \xi'_{\alpha,i}s_{\alpha}.
\]
Next we consider the extension class in $H^1(X, L\otimes (E')^*)$ of given exact sequence.
It is known that the extension class is the image of ${\rm Id}_{E'} \in H^0(X,E' \otimes (E')^*)$ by the connecting homomorphism $H^0(X,E' \otimes (E')^*) \to H^1(X, L \otimes (E')^*)$ induced by the following short exact sequence:
\[0 \to L \otimes (E')^* \to E \otimes (E')^* \to E' \otimes (E')^* \to 0.\]
We can calculate this class using following diagram:
\[\xymatrix{
	C^0(U_\alpha, L \times (E')^*)\ar@{->}[r]\ar@{->}[d]& C^0(U_\alpha, E \times (E)^*)\ar@{->}[r]\ar@{->}[d]& C^0(U_\alpha, E' \times (E)^*)\ar@{->}[d]\\
	C^1(U_\alpha, L \times (E')^*)\ar@{->}[r]& C^1(U_\alpha, E \times (E)^*)\ar@{->}[r]& C^1(U_\alpha, E' \times (E)^*),\\
}
\]
where $C^i(U_\alpha,F)$ denotes the space of \v{C}ech $i$-cochains. Then, we have the following:
\[\xymatrix{
& \sum_{j=2}^r(e_{\alpha,j} \otimes \xi'_{\alpha,j})\ar@{|->}[r] \ar@{|->}[d]
& \sum_{j=2}^r(e'_{\alpha,j} \otimes \xi'_{\alpha,j})\\
\sum_{j=2}^r\displaystyle\frac{h_{\alpha\beta,j}}{g_{\alpha\beta}}\xi'_{\alpha,j}s_\alpha \ar@{|->}[r]& \sum_{j=2}^r(e_{\alpha,j} \otimes \xi'_{\alpha,j} - e_{\beta,j} \otimes \xi'_{\beta,j}). &\\
}
\]
Calculating the map in the second row, we can show that the extension class is identical to the differential of the 0-cochain described above.
Thus the extension class is represented by a exact 1-cocycle, which implies that given extension is trivial.
\end{proof}

\begin{example}\label{EllipticCurve}
Using Theorem \ref{determine}, we can determine all negatively curved singular Hermitian metrics on a nontrivial rank two vector bundle on an elliptic curve, which appeared in \cite{DPS}, Example 1.7.
Let $C$ be an elliptic curve. We define a vector bundle $E$ on $C$ by the nontrivial exact sequence
\[0 \longrightarrow \mathcal{O}_1 \longrightarrow E \longrightarrow \mathcal{O}_2 \longrightarrow 0,\]
where $\mathcal{O}_1 = \mathcal{O}_2 =\mathcal{O}_C$.
Note that $\dim H^1(C,\mathcal{O}_C) = 1$, thus $E$ is uniquely determined up to isomorphism.

There is a more concrete description of $E$ in \cite{DPS}.
We can obtain $E$ as the quotient $E= \mathbb{C} \times \mathbb{C}^2 / \Gamma$, where $\Gamma=\mathbb{Z} + \tau \mathbb{Z}$ is a lattice for the elliptic curve $E$. 
Here, an action of $1, \tau \in \Gamma$ to the space $\mathbb{C} \times \mathbb{C}^2$ is described by
$(x,z_1,z_2) \mapsto (x+1,z_1,z_2)$ and $(x,z_1,z_2) \mapsto (x+\tau,z_1+z_2,z_2)$.

Let $h$ be a negatively curved singular Hermitian metric on $E$.
Then the restriction $h|_{\mathcal{O}_1}$ is also negatively curved.
A negatively curved metric on the trivial line bundle corresponds to a subharmonic function on $C$ via $h \mapsto |1|^2_h$.
Since any subharmonic function on a compact Riemann surface is constant, $h|_{\mathcal{O}_1}$ is also constant and we can write this constant by $h|_{\mathcal{O}_1} = C_0$.
If $C_0 \neq 0$, the assumption of Theorem \ref{determine} is satisfied and it follows that given exact sequence splits.
It contradicts the definition of $E$. Therefore, we have $h|_{\mathcal{O}_1} = 0$.
Moreover, we can show that $h$ has the form $p^*(h')$, where $h'$ is a metric on $\mathcal{O}_2$ which is negatively curved (Lemma \ref{MetricP}). This curvature condition implies $h'$ is constant.

In this example, we showed that $\det h \equiv 0$ for every singular Hermitian metrics $h$ on $E$. This is why we admit singular metrics with $\det h=0$ everywhere.
\end{example}

\begin{lemma}\label{MetricP}
Let $L, L'$ be line bundles, $0 \to L \to E \overset{p}{\to} L' \to 0$ be an exact sequence, and $h$ be a singular Hermitian metric on $E$.
Assume that $h|_L \equiv 0$.
Then, there exists a singular Hermitian metric $h'$ on $L'$ with $h'(p(s)) = h(s)$ for $s \in E$.
Moreover, if $h$ is negatively curved, so is $h'$.
\end{lemma}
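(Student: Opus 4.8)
The plan is to construct $h'$ by fiberwise descent and then verify that it is a singular Hermitian metric inheriting the negativity. The crux is a pointwise linear-algebra observation: for almost every $x \in X$ the fiber $h_x$ is a nonnegative Hermitian form on $E_x$ with $h_x|_{L_x} = 0$, where we regard $L_x \subseteq E_x$ via the injection in the sequence. Since a nonnegative Hermitian form satisfies Cauchy--Schwarz, $|h_x(v,w)|^2 \le h_x(v,v)\,h_x(w,w)$, the vanishing $h_x(v,v)=0$ for $v \in L_x$ forces $h_x(v,w)=0$ for all $w \in E_x$; that is, $L_x$ lies in the null space of $h_x$. Hence $h_x$ factors through the quotient $E_x/L_x \cong L'_x$, and I would set $h'_x(p(s)) := h_x(s)$ for $s \in E_x$. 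This is well defined because $p(s_1)=p(s_2)$ gives $s_1 - s_2 \in L_x$, and expanding $h_x(s_1,s_1)$ while using that $s_1 - s_2$ lies in the null space yields $h_x(s_1,s_1) = h_x(s_2,s_2)$.

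The same descent also makes sense on the measure-zero set where $h$ takes the value $\infty$: there the finite part of $h_x$ lives on a subspace $V_0$ containing $L_x$ (as $h_x|_{L_x}=0$ is finite), and since $p(s) \in p(V_0)$ if and only if $s \in V_0$, the quotient form is unambiguously infinite off $p(V_0)$ and finite on $p(V_0)$. Thus $h'$ is pointwise a nonnegative Hermitian form with values in $[0,\infty]$ for almost every $x$, and the locus where $h'$ is infinite on some vector is contained in the corresponding locus for $h$, hence of measure zero. To confirm that $h'$ is a singular Hermitian metric in the sense of Definition \ref{shmdef}, I would check measurability: because the sequence is exact as sheaves of holomorphic sections, $p \colon \mathcal{O}(E) \to \mathcal{O}(L')$ is locally surjective, so on a small open set any holomorphic section $t$ of $L'$ admits a holomorphic lift $s$ with $p(s)=t$. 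Then $|t|^2_{h'} = h'(p(s)) = h(s) = |s|^2_h$ is measurable, and $h'$ is a singular Hermitian metric on $L'$.

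For the negativity statement I would reuse this local holomorphic lift. Assuming $h$ is negatively curved, for a local holomorphic section $t$ of $L'$ and a holomorphic lift $s$ with $p(s)=t$ we have $|t|^2_{h'} = |s|^2_h$, which is plurisubharmonic by Definition \ref{Grifpos} applied to $h$. Since plurisubharmonicity is a local condition and every holomorphic section of $L'$ is locally of this form, $|t|^2_{h'}$ is plurisubharmonic for every local holomorphic section $t$, i.e.\ $h'$ is negatively curved. The only genuinely delicate point is the fiberwise descent of the first paragraph---making precise that $h_x|_{L_x}=0$ places $L_x$ in the null space of $h_x$ so that the quotient form is well defined (and consistently valued when $h_x = \infty$)---while the transfer of measurability and of plurisubharmonicity through holomorphic lifts is then routine.
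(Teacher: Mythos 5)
Your proposal is correct and follows essentially the same route as the paper: both use the Cauchy--Schwarz inequality for the nonnegative form $h_x$ to conclude that $L_x$ lies in its null space (the paper phrases this as $h(e_1,e_2)=0$ in a local frame), define $h'$ by descent through $p$, and verify negative curvature by choosing local holomorphic lifts of sections of $L'$. Your additional remarks on measurability and on the locus where $h$ takes the value $\infty$ are fine and only make explicit points the paper leaves implicit.
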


\begin{proof}
Let $s_0, s'_0$ be trivializing sections of $L, L'$ respectively.
We take a local holomorphic frame $e_1, e_2$ of $E$ with
\[e_1 = i(s_0), p(e_2) = s'_0,\]
where $i \colon L \to E$.
Then $h(e_1,e_1) = 0$ by assumption.
It follows that $h(e_1,e_2) = 0$ by the Cauchy-Schwarz inequality.

For each $s' \in E'$, take $s$ which satisfies $p(s) = s'$ and define $h'(s')$ by $h(s)$.
Since we have $h(e_1,e_2) = 0$ in the local frame, this definition is independent of the choice of $s$.
Then we have for $s \in E$
\[(p^* h')(s) = h'(p(s)) = h(s),\]
here the second equality is by the definition of $h'$.

Assume that $h$ is negatively curved.
For holomorphic section $s' \in \mathcal{O}(E')$, we can find a holomorphic section $s \in \mathcal{O}(E)$ with $p(s) = s'$.
Therefore $h'(s') = h(s)$ is a plurisubharmonic function. This proves that $h'$ is also negatively curved.
\end{proof}


\providecommand{\bysame}{\leavevmode\hbox to3em{\hrulefill}\thinspace}
\providecommand{\MR}{\relax\ifhmode\unskip\space\fi MR }
\providecommand{\MRhref}[2]{%
  \href{http://www.ams.org/mathscinet-getitem?mr=#1}{#2}
}
\providecommand{\href}[2]{#2}

\end{document}